\newcommand{\sh}{\mathop{\mathrm{sh}}\nolimits}
\newcommand{\ch}{\mathop{\mathrm{ch}}\nolimits}
\newcommand{\tnh}{\mathop{\mathrm{th}}\nolimits}
\newcommand{\Isom}{\mathrm{Isom}}
\title{Systole et petites valeurs propres des surfaces hyperboliques} 
\author{Pierre Jammes}
\address{Université Côte d’Azur, CNRS, LJAD, France}
\email{pjammes@unice.fr}
\date{}
\begin{document}
\begin{abstract}
Soit $S$ une surface hyperbolique orientable compacte de caractéristique
d'Euler $\chi$, et $\lambda_k(S)$ la $k$-ième valeur propre non nulle
du laplacien sur $S$. Selon un célèbre résultat d'Otal et Rosas, 
$\lambda_{-\chi}>\frac14$. Dans cet article, nous montrons que
si la systole de $S$ est supérieure à 3,46, alors 
$\lambda_{-\chi-1}>\frac14$. 
Cette inégalité est vraie aussi pour les surfaces hyperbolique 
géométriquement finies sans pointe avec la même hypothèse sur la systole.
\end{abstract}

\begin{altabstract}
Let $S$ be a closed orientable hyperbolic surface with Euler characteristic
$\chi$, and let $\lambda_k(S)$ be the k-th positive eigenvalue for the 
Laplacian on $S$. According to famous result of Otal and Rosas, 
$\lambda_{-\chi}>\frac14$. In this article, we prove that if the
systole of $S$ is greater than 3,46, then $\lambda_{-\chi-1}>\frac14$.
This inequality is also true for geometrically finite orientable 
hyperbolic surfaces without cusps with the same assumption on the systole.
\end{altabstract}
\keywords{surfaces hyperboliques, systole, petites valeurs propres}
\altkeywords{hyperbolic surfaces, systole, small eigenvalues}

\subjclass{58J50, 30F99, 51M10, 53C22}

\maketitle

\section{Introduction}
Un problème central de la géométrie spectrale des surfaces hyperboliques
est l'étude des valeurs propres du laplacien contenues dans 
l'intervalle $[0,\frac14]$. Un résultat crucial dans ce domaine, du à
J.-P.~Otal et E.~Rosas est que leur nombre est borné en fonction de la 
topologie de la surface et que sur une surface de caractéristique
d'Euler $\chi$, il y en a
au plus $-\chi$ (rappelons que la caractéristique d'Euler d'une surface
orientable de genre $g$ est $\chi = 2 - 2g$). Si $S$ est une surface 
compacte munie d'une 
métrique hyperbolique, notons $0=\lambda_0(S)<\lambda_1(S) \leq 
\lambda_2(S)\leq\ldots$ les valeurs propres du laplacien sur $S$: 

\begin{theo}[\cite{or09}]\label{intro:th1}
Si $S$ est une surface hyperbolique compacte de caractéristique
d'Euler $\chi$, alors 
$\lambda_{-\chi}(S)>\frac14$.
\end{theo}
\begin{rema}
Dans \cite{or09}, ce théorème est démontré pour toute métrique analytique
de courbure négative (la constante $\frac14$ étant alors remplacée par
le bas du spectre du revêtement universel de $S$). Il a été étendu à
des métriques quelconques dans \cite{bmm16} et à des surfaces non compactes
dans \cite{bmm17}.
\end{rema}

P.~Buser avait donné dans \cite{bu77} des exemples montrant que sur toute 
surface compacte de caractéristique~$\chi$, il existe des métriques 
hyperboliques telles que
$\lambda_{-\chi-1} \leq \frac14$. Le but de cet article est de montrer 
que si on suppose que la systole de $S$ est supérieure à une certaine 
constante explicite et \emph{indépendante de la topologie}, alors de tels 
exemples ne 
peuvent pas exister. Ce résultat s'étend aussi aux surfaces hyperboliques
non compactes ne possédant pas de pointes. Rappelons que la systole 
d'une surface hyperbolique compacte est la longueur minimale de ses 
géodésiques périodiques :

\begin{theo}\label{intro:th2}
 Soit $S$ une surface hyperbolique compacte (ou géométriquement
finie sans pointe) 
orientable de caractéristique d'Euler $\chi$. Si la systole de $S$ est 
supérieure à 3,46, alors $\lambda_{-\chi-1}(S)>\frac14$.
\end{theo}

\begin{rema}
On sait qu'il existe des surfaces hyperboliques compactes de systole 
supérieure à 3,46 entre autres pour les genres 3 à 25 (sauf 8, 12 et 24, 
pour lesquels la question reste ouverte) et tous les 
genres impairs (voir \cite{am16} et les références qui y sont données). 
On conjecture qu'il en existe en tout genre supérieur ou égal à~3.
\end{rema}
\begin{rema}
En genre~2, la systole maximale parmi les métriques hyperboliques est
$2\textrm{arcch}(1+\sqrt2) \simeq 3,06<3,46$, ce maximum étant atteint
par la surface de Bolza. L'hypothèse du théorème~\ref{intro:th2} n'est
donc jamais vérifiée en genre~2, mais on sait que pour la surface
de Bolza on a $\lambda_1\simeq 3,84 >\frac14$ (voir par exemple
\cite{su13}).
\end{rema}
\begin{rema}
Le fait que la constante du théorème~\ref{intro:th2} soit indépendante
de la topologie est assez remarquable et contraste avec d'autres problèmes 
similaires. Par exemple, on n'a pas d'énoncé similaire pour $\lambda_1(S)$:
 une suite de revêtements cycliques d'une surface donnée
donne des exemples de surfaces dont le genre tend vers l'infini, la systole
est uniformément minorée et $\lambda_1\to0$. 
\end{rema}

La démonstration du théorème~\ref{intro:th1} repose sur un fait déjà
remarqué par J.-P.~Otal dans \cite{ot08} : sur une surface hyperbolique, 
un domaine nodal d'une fonction propre de valeur propre $\leq\frac14$
de peut pas être de caractéristique d'Euler~1 ou~0. 
Pour obtenir le 
théorème~\ref{intro:th2}, nous allons montrer que sous son hypothèse 
de minoration de la systole, un tel domaine ne peut pas non 
plus être de caractéristique~-1.
Dans le cas non compact, un argument crucial s'effondre dans le cas
où la surface comprend une pointe (voir la remarque~\ref{critique:rema}
de la section~\ref{critique}).

L'étude de la géométrie des surfaces hyperboliques menée pour démontrer
le théorème~\ref{intro:th2} conduit a un autre résultat qui mérite 
d'être souligné ici. H.~Parlier a montré que pour une métrique 
hyperbolique de systole maximale sur une surface de genre fixée, une
géodésique réalisant la systole ne peut pas être séparante
(\cite{pa12}).
Nous allons montrer une version de ce résultat en genres~2 et~3 qui 
est quantitative en deux sens: d'une part, l'hypothèse n'est pas que
la systole est maximale mais qu'elle dépasse une constante explicite, 
et d'autre part la conclusion est que la longueur d'une géodésique
simple séparante est au moins le double de la systole.

\begin{theo}\label{intro:th3}
Soit $S$ une surface hyperbolique compacte orientable de genre~2 ou~3 et
$\gamma$ une géodésique simple séparante de $S$. Si la systole $s$ de $S$ est 
supérieure à~2,696, alors la longueur de $\gamma$ est supérieure à $2s$.
\end{theo}

La section~\ref{trigo} sera consacré à quelques préliminaires de
géométrie hyperbolique. En corollaire, nous y démontrerons aussi le
théorème~\ref{intro:th3}. Dans la section~\ref{critique}, nous 
donnerons des estimés d'exposants critiques pour les surfaces orientables
non compactes de genre~1. Enfin,  après avoir rappelé les grandes lignes 
de la démonstration du théorème~\ref{intro:th1},
nous démontrerons le théorème~\ref{intro:th2} dans la section~\ref{demo}.

\section{Quelques lemmes de géométrie hyperbolique}\label{trigo}
\subsection{Deux lemmes de trigonométrie hyperbolique}

On considère un hexagone convexe rectangle du plan hyperbolique dont
les cotés consécutifs sont notés $a$, $\gamma$, $b$,
$\alpha$, $c$ et $\beta$ (on utilisera abusivement la même notation pour
un coté et sa longueur). 

Le premier lemme permet d'affirmer que si $a$, $b$ et $c$ sont grands,
$a$ étant le plus grand des trois, alors $\beta$ et $\gamma$ sont petits.
Avec ces hypothèses, $\alpha$ n'est pas forcément petit. On pourra
alors déduire du second lemme que si $\alpha$ est suffisamment grand, 
alors $a\geq b+c$.

\begin{lemm}
Si $a\geq b$ et $a\geq c$, alors
\begin{equation}\label{trigo:ineg1}
\sh \frac\gamma 2\cdot\sh\frac b2\leq\frac12.
\end{equation}

De plus, on a égalité si et seulement si $a=b=c$.
\end{lemm}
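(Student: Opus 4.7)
Mon plan est d'utiliser la formule trigonométrique standard pour les hexagones rectangles, puis de ramener l'inégalité à une conséquence immédiate des hypothèses $a\geq b$ et $a\geq c$. Dans l'hexagone considéré, $\gamma$ est situé entre les côtés $a$ et $b$, et $c$ lui est opposé, ce qui donne la formule hexagonale
\[
\ch\gamma\cdot\sh a\cdot\sh b = \ch a\cdot \ch b + \ch c.
\]
En combinant cette relation avec les identités élémentaires $\ch\gamma - 1 = 2\sh^2(\gamma/2)$ et $\ch a\,\ch b - \sh a\,\sh b = \ch(a-b)$, je la récrirai sous la forme
\[
2\sh^2(\gamma/2)\,\sh a\,\sh b = \ch(a-b) + \ch c,
\]
qui sera le point de départ naturel.

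En utilisant $\sh b = 2\sh(b/2)\ch(b/2)$, l'inégalité (\ref{trigo:ineg1}) équivaut alors à
\[
\sh(b/2)\bigl[\ch(a-b) + \ch c\bigr] \leq \sh(a)\,\ch(b/2).
\]
J'exploiterai d'abord l'hypothèse $a\geq c$ pour remplacer $\ch c$ par $\ch a$ dans le membre de gauche. La formule somme-à-produit $\ch(a-b) + \ch a = 2\ch(a-b/2)\ch(b/2)$, suivie d'une simplification par $\ch(b/2)$, ramène alors le problème à
\[
2\sh(b/2)\,\ch(a-b/2) \leq \sh(a).
\]
Cette dernière inégalité est immédiate grâce à la formule produit-à-somme $2\sh(b/2)\,\ch(a-b/2) = \sh(a) - \sh(a-b)$, combinée à $\sh(a-b)\geq 0$ assuré par $a\geq b$.

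Le cas d'égalité se lit directement au cours du calcul: il exige simultanément $\ch c = \ch a$ et $\sh(a-b)=0$, c'est-à-dire $a=b=c$; la réciproque est claire. Je n'anticipe aucune véritable difficulté: il s'agit essentiellement d'une manipulation de trigonométrie hyperbolique, la seule subtilité étant de choisir le bon regroupement de termes pour que les deux hypothèses interviennent successivement et épuisent exactement la marge disponible dans l'inégalité.
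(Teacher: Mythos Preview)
Your argument is correct. Both you and the paper start from the same hexagon relation $\ch\gamma\,\sh a\,\sh b=\ch a\,\ch b+\ch c$, but the subsequent manipulations differ. The paper isolates $\ch\gamma$, bounds the term $\dfrac{\ch c}{\sh a\,\sh b}$ by $\dfrac{\ch b}{\sh^2 b}$ via a case distinction $b\gtrless c$, and then uses $\coth a\le\coth b$ to reach $\ch\gamma\le\dfrac{\ch b+\ch^2 b}{\sh^2 b}$ before simplifying. You instead subtract $\sh a\,\sh b$ once to produce $2\sh^2(\gamma/2)\,\sh a\,\sh b=\ch(a-b)+\ch c$, and then two sum/product identities reduce the claim to $\sh(a-b)\ge 0$; the hypotheses $a\ge c$ and $a\ge b$ enter exactly once each, with no case split needed. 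Your route is a bit more streamlined and makes the equality case ($\ch c=\ch a$ and $\sh(a-b)=0$) immediately transparent, while the paper's version has the minor advantage of staying closer to the familiar $\coth$ bounds; mathematically they are equivalent reorganisations of the same computation.
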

\begin{proof}
On part de la relation trigonométrique classique (\cite{bu92}, th.~2.4.1)
\begin{equation}\label{cosinus}
\ch c = \sh a \cdot\sh b \cdot\ch \gamma - \ch a\cdot\ch b
\end{equation}
qui donne :
\begin{equation}
\nonumber
\ch \gamma = \frac{\ch c}{\sh a\cdot \sh b} + \coth a\cdot\coth b.
\end{equation}
Si $b\geq c$, on a la majoration
\begin{equation}\label{trigo:1}
 \frac{\ch c}{\sh a\cdot \sh b} \leq \frac{\ch b}{\sh b^2}
\end{equation}
et si $c\geq b$, on obtient la même inégalité en écrivant
\begin{equation}\label{trigo:2}
\frac{\ch c}{\sh a\cdot \sh b} \leq \frac{\ch c}{\sh c\cdot \sh b}
= \frac{\coth c}{\sh b}\leq \frac{\coth b}{\sh b} = \frac{\ch b}{\sh b^2}.
\end{equation}

On en déduit 
\begin{equation}
\ch \gamma\leq \frac{\ch b}{\sh b^2} + \coth a\cdot\coth b
\leq \frac{\ch b}{\sh b^2} + \coth b^2 = \frac{\ch b + \ch b^2}{\sh b^2}.
\end{equation}

En utilisant les identités entre fonctions hyperboliques, on obtient ensuite
\begin{equation}
2\sh^2\frac\gamma2 = \ch\gamma -1 \leq
\frac{\ch b + \ch^2 b - \sh^2 b}{\sh^2 b} = \frac{\ch b +1}{\sh^2 b}
= \frac{2\ch^2\frac b2}{4\sh^2\frac b2\ch^2\frac b2},
\end{equation}
ce qui se simplifie en $\sh^2\frac\gamma2\leq\frac 1{4\sh^2\frac b2}$,
soit $\sh \frac\gamma 2\cdot\sh\frac b2\leq\frac12$.

Pour établir l'inégalité~(\ref{trigo:1}) on a utilisé le fait que 
$a\geq b$ et $a\geq c$, et en (\ref{trigo:2}) le fait que $a\geq b$ et 
$c\geq b$.
L'égalité dans~(\ref{trigo:ineg1}) va impliquer l'égalité dans (\ref{trigo:1})
ou dans (\ref{trigo:2}) et dans les deux cas on aura l'égalité $a=b=c$.
Réciproquement, on vérifie facilement que si $a=b=c$ on a bien
$\sh \frac\gamma 2\cdot\sh\frac b2 = \frac12$.
\end{proof}

\begin{lemm}\label{trigo:lem2}
 On a l'inégalité $a\geq b+c$ si et seulement si
$\sh^2 \frac\alpha2 \cdot \tnh b \cdot \tnh c \geq 1$.
\end{lemm}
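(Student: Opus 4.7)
The plan is to reduce the claimed equivalence to a direct algebraic manipulation of the hyperbolic cosine rule for the right-angled hexagon, exactly in the spirit of the previous lemma.

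First I would invoke the trigonometric identity (\ref{cosinus}) applied with the roles of the sides permuted so that $\alpha$ is the side opposite to $a$, namely
\[
\ch a = \sh b \cdot \sh c \cdot \ch \alpha - \ch b \cdot \ch c.
\]
Since $\ch$ is strictly increasing on $[0,+\infty)$, the inequality $a \geq b+c$ is equivalent to $\ch a \geq \ch(b+c)$, and expanding the right-hand side via $\ch(b+c) = \ch b \cdot \ch c + \sh b \cdot \sh c$, this becomes
\[
\sh b \cdot \sh c \cdot \ch \alpha - \ch b \cdot \ch c \geq \ch b \cdot \ch c + \sh b \cdot \sh c,
\]
i.e.\ $\sh b \cdot \sh c \cdot (\ch \alpha - 1) \geq 2 \ch b \cdot \ch c$.

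Then I would apply the half-angle identity $\ch \alpha - 1 = 2 \sh^2 \frac{\alpha}{2}$ to rewrite this as
\[
2 \sh^2 \frac{\alpha}{2} \cdot \sh b \cdot \sh c \geq 2 \ch b \cdot \ch c,
\]
and divide both sides by the positive quantity $\ch b \cdot \ch c$ to obtain precisely
\[
\sh^2 \frac{\alpha}{2} \cdot \tnh b \cdot \tnh c \geq 1.
\]
Since every step is an equivalence (the hexagon identity is an equality, $\ch$ is strictly monotone on $[0,+\infty)$, and the factors we divide by are strictly positive), this establishes the stated biconditional. There is no serious obstacle here: the main point is simply recognizing the correct permutation of labels in (\ref{cosinus}) and packaging $\coth b \cdot \coth c$ as $(\tnh b \cdot \tnh c)^{-1}$.
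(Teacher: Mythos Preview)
Your proof is correct and essentially identical to the paper's own argument: both apply the hexagon identity in the form $\ch a = \sh b\,\sh c\,\ch\alpha - \ch b\,\ch c$, subtract $\ch(b+c)=\ch b\,\ch c+\sh b\,\sh c$, use $\ch\alpha-1=2\sh^2\frac\alpha2$, and factor out $\ch b\,\ch c$ to exhibit the equivalence. The only cosmetic difference is that the paper phrases it as showing the two quantities have the same sign, while you carry the inequality through directly.
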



\begin{proof}
Comme le cosinus hyperbolique est une fonction croissante, il suffit de  
montrer que $\sh^2 \frac\alpha2  \cdot \tnh b \cdot \tnh b -1$ et
$\ch a - \ch(b+c)$ sont de même signe.
On part à nouveau de la relation~(\ref{cosinus}) pour écrire :
\begin{eqnarray*}
\ch a - \ch(b+c) & = & \sh b \cdot\sh c \cdot\ch \alpha - \ch b\cdot\ch c\\
& & -(\ch b \cdot \ch c + \sh b \cdot \sh c ) \\
& = &  \sh b \cdot\sh c (\ch \alpha - 1) - 2 \ch b\cdot\ch c\\
& = & 2\sh b \cdot\sh c \cdot \sh^2 \frac\alpha2 - 2 \ch b\cdot\ch c\\
& = & \frac 2{\ch b\cdot\ch c}(\tnh b\cdot\tnh c \cdot \sh^2 \frac\alpha2 -1).
\end{eqnarray*}
 L'inégalité  $\ch a \geq \ch(b+c)$ est donc bien équivalente à  
$\sh^2 \frac\alpha2 \cdot \tnh b\cdot\tnh c -1\geq0$.
\end{proof}

\subsection{Surface de genre (1,1) de grande systole}

Le dernier lemme de cette section établit que si une surface hyperbolique
de genre (1,1) à bord géodésique a une systole suffisamment grande, alors
la longueur du bord est au moins égale au double de la systole. Ce 
résultat permet d'affiner les estimées d'exposant critique de la section
suivante.
Le théorème~\ref{intro:th3} en est aussi une conséquence directe.
\begin{lemm}\label{trigo:lem3}
Soit $S$ une surface hyperbolique de genre $(1,1)$ à bord géodésique. Si
la systole $s$ de $S$ vérifie $s\geq 2,696$, alors la longueur du bord est
supérieure à $2s$.
\end{lemm}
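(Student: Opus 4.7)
The plan is to cut $S$ along a systolic geodesic, reduce the conclusion to a single hyperbolic trigonometric inequality via the hexagon identity and Lemma~\ref{trigo:lem2}, and produce the required lower bound on the orthogeodesic between the two copies of the systole using a closed geodesic transverse to it. Let $\gamma_0$ be a systolic geodesic of $S$; I shall assume for now that $\gamma_0$ is an interior simple closed geodesic (the boundary case is handled at the end). Cutting $S$ along $\gamma_0$ yields a hyperbolic pair of pants $P$ with boundary lengths $(\ell,s,s)$, where $\ell:=\ell(\partial S)$, and I write $p$ for the length of the orthogeodesic of $P$ joining the two $s$-boundaries. The pants decomposes into two isometric right-angled hexagons with consecutive sides $(\ell/2,r,s/2,p,s/2,r)$. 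A cyclic form of~\eqref{cosinus} giving $\ch(\ell/2)$ in terms of the two $s/2$-sides and the opposite side $p$ is $\ch(\ell/2)=\sh^2(s/2)\ch p-\ch^2(s/2)$, which after using $\ch c+1=2\ch^2(c/2)$ simplifies to
\[
\sh(p/2)=\ch(\ell/4)/\sh(s/2).
\]
Applying Lemma~\ref{trigo:lem2} with $a=\ell/2$, $b=c=s/2$ and $\alpha=p$, the goal $\ell\geq 2s$ is equivalent to the single inequality
\[
\sh(p/2)\cdot\tnh(s/2)\geq 1.
\]

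The key step is a lower bound on $p$ from the systole. Let $\tau\in[-s/2,s/2]$ be the Fenchel--Nielsen twist along $\gamma_0$, normalised by a Dehn twist about $\gamma_0$. Any simple closed geodesic $\gamma_1\subset S$ meeting $\gamma_0$ transversally in one point cuts along $\gamma_0$ into a geodesic arc in $P$ joining the two $s$-boundaries with endpoints matched under the twisted gluing. A direct hyperbolic-trigonometric computation in the resulting Saccheri quadrilateral (with base the orthogeodesic of length $p$, and minimising base point at the midpoint of the twist) yields
\[
\sh\bigl(\ell(\gamma_1)/2\bigr)=\ch(\tau/2)\cdot\sh(p/2).
\]
Since $\ell(\gamma_1)\geq s$ by the systole hypothesis and $\ch(\tau/2)\leq \ch(s/4)$, this gives
\[
\sh(p/2)\geq\sh(s/2)/\ch(s/4)=2\sh(s/4).
\]

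To conclude, it remains to verify the one-variable inequality $2\sh(s/4)\,\tnh(s/2)\geq 1$ for $s\geq 2{,}696$; after the substitution $c=\ch(s/4)$, this reduces to checking that $4c^3-2c^2-4c+1\geq 0$ at the corresponding value of $c$, which follows by monotonicity and direct numerical evaluation. The case $\gamma_0=\partial S$ is excluded by applying the same length formula with $\ell=s$: one then obtains $1\geq 4\sh^2(s/4)$, forcing $s\leq 4\,\textrm{arcsh}(1/2)\approx 1{,}925$, contradicting the hypothesis $s\geq 2{,}696$.

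The main obstacle is the derivation of the length formula $\sh(\ell(\gamma_1)/2)=\ch(\tau/2)\sh(p/2)$. This requires a careful computation in the universal cover with the correct orientation-reversing gluing convention on the boundary circles, and the verification that the shortest representative of $\gamma_1$ among its Dehn-twist orbit corresponds to $|\tau|\leq s/2$. The other steps are routine given the trigonometric identities established earlier.
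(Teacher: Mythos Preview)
Your strategy coincides with the paper's: cut along the shortest interior simple closed geodesic, apply Lemma~\ref{trigo:lem2} with $a=\ell(\partial S)/2$, $b=c=s/2$, $\alpha=p$, and reduce everything to a lower bound on the orthogeodesic $p$ obtained from a curve crossing the cut once. The paper extracts this bound from a concrete right triangle (hypotenuse $xx'$ joining one foot of $p$ to the glued image of the other foot, legs $p$ and at most $l/2$), while you invoke the exact Fenchel--Nielsen length formula for the shortest transverse geodesic; your route is in principle sharper.

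The gap is precisely at the step you yourself flag as the main obstacle: the identity $\sh(\ell(\gamma_1)/2)=\ch(\tau/2)\,\sh(p/2)$ is not the correct one for the one-holed torus. If one works out the gluing carefully (orientation-reversing on the boundary circles, as you say), the two endpoints of the $\gamma_1$-arc in the pants lie on \emph{opposite} sides of the common perpendicular, not on the same side. With the feet $O\in A$, $O'\in A'$ as origins and the gluing written $y\mapsto y+\tau$, the half-turn about the midpoint of $OO'$ forces the smooth closed geodesic to meet $A$ at parameter $-\tau/2$ and $A'$ at parameter $+\tau/2$. The quadrilateral is therefore ``crossed'', not Saccheri, and the correct relation is
\[
\ch\bigl(\ell(\gamma_1)/2\bigr)=\ch(p/2)\,\ch(\tau/2)
\]
(equivalently, the trace of $H\,T_\tau$ in $\mathrm{PSL}(2,\mathbb R)$, with $H$ the translation of length $p$ along the common perpendicular and $T_\tau$ the translation by $\tau$ along $A$). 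With this correction your bound becomes $\ch(p/2)\ge \ch(s/2)/\ch(s/4)$, and one checks that $\sh(p/2)\,\tnh(s/2)\ge 1$ still holds for $s\ge 2{,}696$ (indeed already for $s\gtrsim 2{,}46$). So the approach is salvageable once the formula is fixed; your treatment of the boundary case relies on the same erroneous identity and needs the analogous correction.
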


\begin{proof}
On note $\gamma_0$ la géodésique qui borde la surface $S$ et $\gamma_1$ la
plus courte géodésique simple qui découpe $S$ en un pantalon. On découpe
ce pantalon en deux hexagones rectangles isométriques en notant  
$\gamma_2$, $\gamma_3$ et $\gamma_3'$ les arcs qui les bordent, 
comme sur la figure \ref{trigo:pant11}.
\begin{figure}[h]
\begin{center}
\begin{picture}(0,0)%
\includegraphics{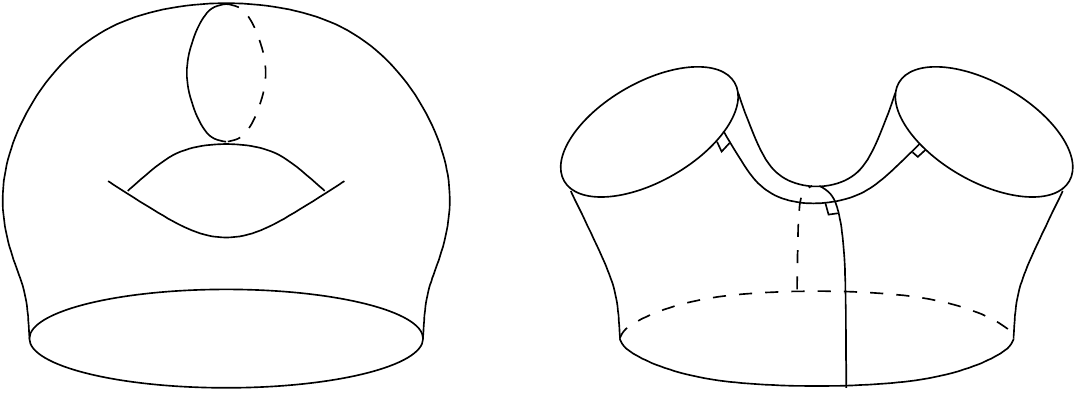}%
\end{picture}%
\setlength{\unitlength}{4144sp}%
\begingroup\makeatletter\ifx\SetFigFont\undefined%
\gdef\SetFigFont#1#2#3#4#5{%
  \reset@font\fontsize{#1}{#2pt}%
  \fontfamily{#3}\fontseries{#4}\fontshape{#5}%
  \selectfont}%
\fi\endgroup%
\begin{picture}(4913,1839)(992,-2605)
\put(2836,-2536){\makebox(0,0)[lb]{\smash{{\SetFigFont{12}{14.4}{\rmdefault}{\mddefault}{\updefault}{\color[rgb]{0,0,0}$\gamma_0$}%
}}}}
\put(5536,-2536){\makebox(0,0)[lb]{\smash{{\SetFigFont{12}{14.4}{\rmdefault}{\mddefault}{\updefault}{\color[rgb]{0,0,0}$\gamma_0$}%
}}}}
\put(1621,-1141){\makebox(0,0)[lb]{\smash{{\SetFigFont{12}{14.4}{\rmdefault}{\mddefault}{\updefault}{\color[rgb]{0,0,0}$\gamma_1$}%
}}}}
\put(3646,-1096){\makebox(0,0)[lb]{\smash{{\SetFigFont{12}{14.4}{\rmdefault}{\mddefault}{\updefault}{\color[rgb]{0,0,0}$\gamma_1$}%
}}}}
\put(4906,-2311){\makebox(0,0)[lb]{\smash{{\SetFigFont{12}{14.4}{\rmdefault}{\mddefault}{\updefault}{\color[rgb]{0,0,0}$\gamma_4$}%
}}}}
\put(5626,-1096){\makebox(0,0)[lb]{\smash{{\SetFigFont{12}{14.4}{\rmdefault}{\mddefault}{\updefault}{\color[rgb]{0,0,0}$\gamma_1$}%
}}}}
\put(5716,-2041){\makebox(0,0)[lb]{\smash{{\SetFigFont{12}{14.4}{\rmdefault}{\mddefault}{\updefault}{\color[rgb]{0,0,0}$\gamma_3'$}%
}}}}
\put(4996,-1726){\makebox(0,0)[lb]{\smash{{\SetFigFont{12}{14.4}{\rmdefault}{\mddefault}{\updefault}{\color[rgb]{0,0,0}$\gamma_2$}%
}}}}
\put(3556,-2041){\makebox(0,0)[lb]{\smash{{\SetFigFont{12}{14.4}{\rmdefault}{\mddefault}{\updefault}{\color[rgb]{0,0,0}$\gamma_3$}%
}}}}
\end{picture}%
\end{center}
\caption{Découpage de la surface \label{trigo:pant11}}
\end{figure}

On veut appliquer le lemme~\ref{trigo:lem2} en prenant pour $a$ la moitié
de la géodésique~$\gamma_0$. Les cotés $b$ et $c$ sont alors deux copies
d'un moitié de $\gamma_1$, et $\alpha$ est la géodésique~$\gamma_2$. 
La longueur de $\gamma_1$, qu'on notera $l$, est minorée par $s$. 
Par conséquent $a,b\geq l/2 \geq s/2$.

\begin{figure}[h]
\begin{center}
\begin{picture}(0,0)%
\includegraphics{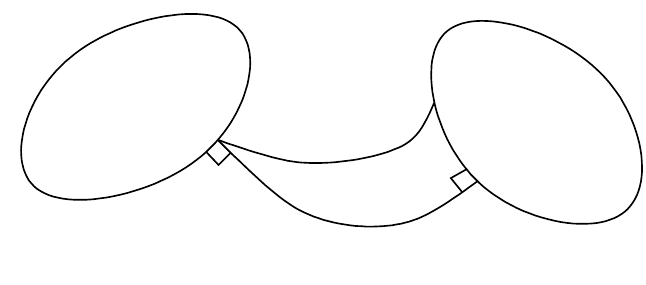}%
\end{picture}%
\setlength{\unitlength}{4144sp}%
\begingroup\makeatletter\ifx\SetFigFont\undefined%
\gdef\SetFigFont#1#2#3#4#5{%
  \reset@font\fontsize{#1}{#2pt}%
  \fontfamily{#3}\fontseries{#4}\fontshape{#5}%
  \selectfont}%
\fi\endgroup%
\begin{picture}(2947,1288)(2081,-2561)
\put(4313,-1999){\makebox(0,0)[lb]{\smash{{\SetFigFont{12}{14.4}{\rmdefault}{\mddefault}{\updefault}{\color[rgb]{0,0,0}$y$}%
}}}}
\put(2891,-1873){\makebox(0,0)[lb]{\smash{{\SetFigFont{12}{14.4}{\rmdefault}{\mddefault}{\updefault}{\color[rgb]{0,0,0}$x$}%
}}}}
\put(2096,-1522){\makebox(0,0)[lb]{\smash{{\SetFigFont{12}{14.4}{\rmdefault}{\mddefault}{\updefault}{\color[rgb]{0,0,0}$\gamma_1$}%
}}}}
\put(3703,-2492){\makebox(0,0)[lb]{\smash{{\SetFigFont{12}{14.4}{\rmdefault}{\mddefault}{\updefault}{\color[rgb]{0,0,0}$\gamma_2$}%
}}}}
\put(4126,-1711){\makebox(0,0)[lb]{\smash{{\SetFigFont{12}{14.4}{\rmdefault}{\mddefault}{\updefault}{\color[rgb]{0,0,0}$x'$}%
}}}}
\put(4676,-1432){\makebox(0,0)[lb]{\smash{{\SetFigFont{12}{14.4}{\rmdefault}{\mddefault}{\updefault}{\color[rgb]{0,0,0}$\gamma_1$}%
}}}}
\end{picture}%
\end{center}
\caption{Minoration de la longueur de $\gamma_2$ \label{trigo:pythagore}}
\end{figure}

Il reste à minorer~$\alpha$, c'est-à-dire la longueur de la géodésique
$\gamma_2$. Notons $x$ et $y$ les extrémités de $\gamma_2$, et $x'$ le
point situé sur la même copie de $\gamma_1$ que $y$ et qui s'identifie
à $x$ dans $S$ (rappelons que dans $S$, l'arc $\gamma_2$ n'est pas
nécessairement une courbe fermée).

Le triangle $xx'y$ est rectangle en $y$, ce qui permet d'estimer la
longueur de $\gamma_2$ à l'aide du théorème de Pythagore :

\begin{equation}
\cosh (xx') = \cosh (xy) \cdot \cosh(yx').
\end{equation}

Comme $xx' \geq l$ et $yx' \leq l/2$ on en déduit:

\begin{equation}\label{trigo:ineg2}
\cosh \alpha = \cosh (xy) \geq \frac{\cosh l}{\cosh \frac l2}.
\end{equation}

Rappelons que $b$ et $c$ sont minorés par $s/2$. De plus,
$\sinh^2 \frac\alpha{2} = (\cosh \alpha -1)/2$ est minoré grâce à
l'inégalité~(\ref{trigo:ineg2}). On a donc
\begin{equation}
\sh^2 \frac\alpha2 \cdot \tnh b \cdot \tnh c \geq
\frac 12 \cdot \left( \frac{\cosh s}{\cosh \frac s2} - 1 \right)
 \cdot \tnh \frac s2.
\end{equation}
 On peut vérifier que le membre de droite est bien minoré par~1 sous
l'hypothèse $s>2,696$. On peut alors appliquer le lemme~\ref{trigo:lem2}
et conclure que la longueur de $\gamma_0$ est bien minorée par $2s$. 


\end{proof}
\begin{rema}\label{trigo:rem}
Dans la figure~\ref{trigo:pythagore}, on peut minorer $xy$ à l'aide
de l'inégalité triangulaire : $xy\geq xx' - x'y \geq l - l/2 = l/2$.
Cette inégalité, moins précise mais plus pratique que (\ref{trigo:ineg2}),
nous sera utile dans la section suivante.
\end{rema}
\begin{proof}[Démonstration du théorème~~\ref{intro:th3}]
Soit $S$ une surface hyperbolique orientable de genre~2 ou~3 et $\gamma$ 
une géodésique simple séparante de $S$. La géodésique $\gamma$ sépare $S$ 
en deux surfaces orientables 
ayant une seule composante de bord, dont au moins une est de genre~1.
Le lemme~\ref{trigo:lem3} appliqué à cette surface 
de genre~1 assure alors que la longueur de la géodésique $\gamma$
est au moins égale au double de la systole si $s\geq 2,696$.
\end{proof}

\section{Exposants critiques}\label{critique}
Nous allons démontrer dans cette section des majorations d'exposants
critiques pour les groupes fuchsiens convexes cocompacts dont le quotient
est orientable et de caractéristique~-1. En particulier, on cherche une
condition sur la systole pour que l'exposant critique soit strictement
inférieur à $\frac12$. On obtient aussi au passage une majoration
de la série de Poincaré du groupe fuchsien. Rappelons que si $x$ est
un point du plan hyperbolique, la série de Poincaré du groupe $\Gamma$
est définie par
\begin{equation}
P_\Gamma(x,s) = \sum_{g\in\Gamma}e^{-sd(x,g\cdot x)}
\end{equation}
et que son abscisse de convergence ne dépend pas du choix de $x$.

\begin{theo}\label{critique:theo}
Soit $l>0$ et $\Gamma$ un sous-groupe convexe cocompact de 
$\Isom^+(\mathbb H^2)$
tel que la systole de $S=\mathbb H^2/\Gamma$ soit supérieure ou égale à $l$.
\begin{enumerate}
\item Si $S$ est de genre $(0,3)$, alors $\delta_\Gamma \leq \frac{2\ln 2}l$ 
et pour tout point $x$ du c\oe{}ur convexe, on a
$$\displaystyle P_\Gamma(x,s) \leq
\frac{4e^{-s\frac l2}+8 e^{-sl}}{1-e^{-s\frac l2}-2e^{-sl}}$$
 pour $s>\frac{2\ln 2}l$. En particulier, si $l> 4\ln 2$, alors 
 $\delta_\Gamma<\frac12$.
\item Si $S$ est de genre $(1,1)$ et que $l>2,696$, alors $\delta_\Gamma \leq 
\frac{1,73}l $ et il existe un point $x$ du c\oe{}ur convexe tel que
$$\displaystyle P_\Gamma(x,s) \leq
\frac{1 + 3e^{-sl}}{ 1 -  2e^{-sl/2} - e^{-sl}
+ 2 e^{-3sl/2}- 4e^{-2sl}}$$
pour $s>\frac{1,73}l$. En particulier, si $l>3,46$ alors  
$\delta_\Gamma<\frac12$.
\end{enumerate}
\end{theo}

\begin{rema}\label{critique:rema}
Dans le cas où le groupe $\Gamma$ n'est pas convexe cocompact 
(c'est-à-dire qu'il contient des éléments paraboliques, et donc que 
la surface quotient comprend au moins une pointe) un résultat de Beardon
assure que son exposant critique est strictement supérieur à $\frac12$.
\end{rema}

\begin{proof}
Pour majorer cette série et son abscisse de convergence, on va 
chercher une minoration de la distance $d(x,g\cdot x)$.
On fixe un point $x$ dans le (revêtement du) c\oe{}ur convexe. Le point 
de départ de la 
démonstration est de remarquer qu'un segment
géodésique joignant deux points du c\oe{}ur convexe de $S$ est entièrement
contenu dans ce c\oe{}ur. En effet, une géodésique qui en sort
n'y revient jamais. On va donc estimer la
distance $d(x,g\cdot x)$ en se restreignant au revêtement universel du 
c\oe{}ur convexe.

\begin{figure}[h]
\begin{center}
\begin{picture}(0,0)%
\includegraphics{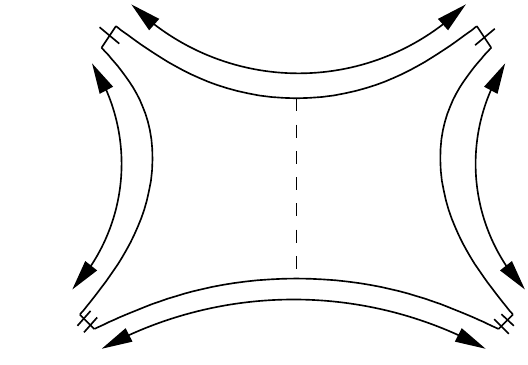}%
\end{picture}%
\setlength{\unitlength}{4144sp}%
\begingroup\makeatletter\ifx\SetFigFont\undefined%
\gdef\SetFigFont#1#2#3#4#5{%
  \reset@font\fontsize{#1}{#2pt}%
  \fontfamily{#3}\fontseries{#4}\fontshape{#5}%
  \selectfont}%
\fi\endgroup%
\begin{picture}(2408,1656)(2146,-1754)
\put(3376,-286){\makebox(0,0)[lb]{\smash{{\SetFigFont{12}{14.4}{\rmdefault}{\mddefault}{\updefault}{\color[rgb]{0,0,0}$\geq l$}%
}}}}
\put(3376,-1681){\makebox(0,0)[lb]{\smash{{\SetFigFont{12}{14.4}{\rmdefault}{\mddefault}{\updefault}{\color[rgb]{0,0,0}$\geq l$}%
}}}}
\put(2161,-916){\makebox(0,0)[lb]{\smash{{\SetFigFont{12}{14.4}{\rmdefault}{\mddefault}{\updefault}{\color[rgb]{0,0,0}$\geq l/2$}%
}}}}
\put(4366,-916){\makebox(0,0)[lb]{\smash{{\SetFigFont{12}{14.4}{\rmdefault}{\mddefault}{\updefault}{\color[rgb]{0,0,0}$\geq l/2$}%
}}}}
\end{picture}%
\end{center}
\caption{Domaine fondamental du c\oe{}ur convexe \label{rel:dom03}}
\end{figure}

 On peut choisir comme domaine fondamental un octogone rectangle
formé de deux hexagones, comme sur la figure~\ref{rel:dom03}.

\begin{figure}[h]
\begin{center}
\begin{picture}(0,0)%
\includegraphics{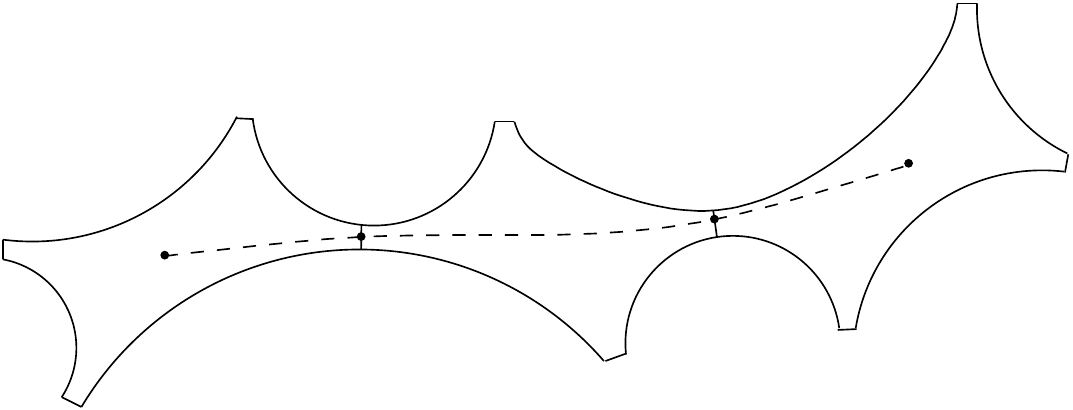}%
\end{picture}%
\setlength{\unitlength}{4144sp}%
\begingroup\makeatletter\ifx\SetFigFont\undefined%
\gdef\SetFigFont#1#2#3#4#5{%
  \reset@font\fontsize{#1}{#2pt}%
  \fontfamily{#3}\fontseries{#4}\fontshape{#5}%
  \selectfont}%
\fi\endgroup%
\begin{picture}(4896,1869)(214,-1648)
\put(658,-1094){\makebox(0,0)[lb]{\smash{{\SetFigFont{12}{14.4}{\rmdefault}{\mddefault}{\updefault}{\color[rgb]{0,0,0}$x$}%
}}}}
\put(4450,-476){\makebox(0,0)[lb]{\smash{{\SetFigFont{12}{14.4}{\rmdefault}{\mddefault}{\updefault}{\color[rgb]{0,0,0}$g\cdot x$}%
}}}}
\put(1785,-1107){\makebox(0,0)[lb]{\smash{{\SetFigFont{12}{14.4}{\rmdefault}{\mddefault}{\updefault}{\color[rgb]{0,0,0}$x_1$}%
}}}}
\put(3397,-1032){\makebox(0,0)[lb]{\smash{{\SetFigFont{12}{14.4}{\rmdefault}{\mddefault}{\updefault}{\color[rgb]{0,0,0}$x_2$}%
}}}}
\end{picture}%
\end{center}
\caption{géodésique dans le revêtement du c\oe{}ur convexe
\label{rel:dom0rev}}
\end{figure}

La géodésique de $x$ à $g\cdot x$ traverse plusieurs domaines fondamentaux.
On note $x_1,\ldots,x_n$ les les points d'intersections successifs de
cette géodésique avec les frontières des domaines, comme sur la 
figure~\ref{rel:dom0rev}. On est alors ramené à minorer les distances
$d(x_i, x_{i+1})$. Trois configurations sont possibles, illustrées
par la figure~\ref{rel:dom0l}:
\begin{enumerate}
\item Le segment géodésique $[x_i,x_{i+1}]$ longe un bord du domaine sans
traverser la frontière entre les deux hexagones (ligne pointillée sur 
la figure \ref{rel:dom0l}). On minore sa longueur par une constante
$l_1$ qui sera précisée plus loin.
\item Le segment traverse le domaine en diagonale. On minore sa longueur
par $l_2$.
\item Le segment longe un bord du domaine en changeant d'hexagone et
on minore la longueur par $l_3$. 
\end{enumerate}

\begin{figure}[h]
\begin{center}
\begin{picture}(0,0)%
\includegraphics{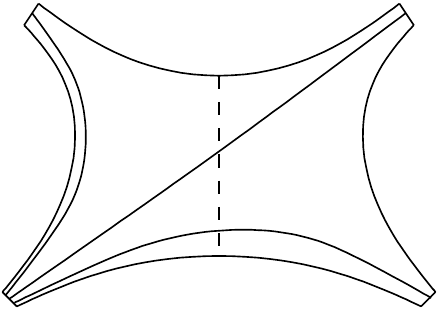}%
\end{picture}%
\setlength{\unitlength}{4144sp}%
\begingroup\makeatletter\ifx\SetFigFont\undefined%
\gdef\SetFigFont#1#2#3#4#5{%
  \reset@font\fontsize{#1}{#2pt}%
  \fontfamily{#3}\fontseries{#4}\fontshape{#5}%
  \selectfont}%
\fi\endgroup%
\begin{picture}(2004,1409)(2499,-1603)
\put(2951,-814){\makebox(0,0)[lb]{\smash{{\SetFigFont{12}{14.4}{\rmdefault}{\mddefault}{\updefault}{\color[rgb]{0,0,0}$\geq l_1$}%
}}}}
\put(3754,-859){\makebox(0,0)[lb]{\smash{{\SetFigFont{12}{14.4}{\rmdefault}{\mddefault}{\updefault}{\color[rgb]{0,0,0}$\geq l_2$}%
}}}}
\put(3709,-1169){\makebox(0,0)[lb]{\smash{{\SetFigFont{12}{14.4}{\rmdefault}{\mddefault}{\updefault}{\color[rgb]{0,0,0}$\geq l_3$}%
}}}}
\end{picture}%
\end{center}
\caption{Segments de géodésique traversant un domaine fondamental
\label{rel:dom0l}}
\end{figure}

On peut alors écrire la majoration :
\begin{equation}\label{critique:maj0}
d(x,g\cdot x)= d(x,x_1) + \sum_{i=1}^n d(x_{i-1},x_i) + d(x_n,g\cdot x)
\geq\sum_{i=1}^n l_{\alpha_i}, \end{equation}
avec $\alpha_i\in\{1,2,3\}$.

Il est crucial de noter que le domaine fondamental où est situé $g\cdot x$
est entièrement déterminé par la position de $x_1$ et par le mot
$\alpha=\alpha_1\ldots\alpha_n$. L'élément $g\in \Gamma$ lui-même est 
donc déterminé par $\alpha$ et la direction (parmi quatre possibles) 
du segment $[x,x_1]$. On peut donc majorer la série de poincaré
$P_\Gamma(s)$ de la manière suivante:
\begin{equation}\label{critique:maj1}
P_\Gamma(s) \leq 
4\sum_{n=1}^\infty\sum_{|\alpha|=n} e^{-s\sum_{i=1}^n l_{\alpha_i}}
=4\sum_{n=1}^\infty \left(e^{-sl_1}+e^{-sl_2}+e^{-sl_3}\right)^n.
\end{equation}
Cette dernière série converge si $e^{-sl_1}+e^{-sl_2}+e^{-sl_3} < 1$. Compte
tenu de la géométrie du domaine fondamental (cf. Figure\ref{rel:dom03}),
on peut minorer $l_1$ par $l/2$, $l_2$ et $l_3$ par $l$. Il suffit donc
que $2e^{-sl}+e^{-s\frac l2} <1$ pour que la série converge. En posant 
$x=e^{-s\frac l2}$, cette équation devient $2x^2+x-1<0$, ce qui permet 
de voir que la condition  $e^{-s\frac l2}<\frac 12$ est suffisante.
 La série de 
Poincaré converge donc pour tout $s>\frac{2\ln 2}l$ ce qui implique que
$\delta\leq\frac{2\ln 2}l$.

Pour $s>\frac{2\ln 2}l$, la majoration (\ref{critique:maj1}) donne
\begin{equation}
P_\Gamma(s) \leq\frac{4e^{-s\frac l2}+8 e^{-sl}}{1-e^{-s\frac l2}-2e^{-sl}}.
\end{equation}

Dans le cas d'une surface de genre (1,1), on note $\gamma_0$ la géodésique
qui borde le c\oe{}ur convexe et on considère le découpage 
en deux hexagones rectangles définis par $\gamma_0$, $\gamma_1$, 
$\gamma_4$ et $\gamma_3$ et $\gamma'_3$
(cf.  figure~\ref{rel:pant11a}).

\begin{figure}[h]
\begin{center}
\begin{picture}(0,0)%
\includegraphics{pant11a}%
\end{picture}%
\setlength{\unitlength}{4144sp}%
\begingroup\makeatletter\ifx\SetFigFont\undefined%
\gdef\SetFigFont#1#2#3#4#5{%
  \reset@font\fontsize{#1}{#2pt}%
  \fontfamily{#3}\fontseries{#4}\fontshape{#5}%
  \selectfont}%
\fi\endgroup%
\begin{picture}(4913,1839)(992,-2605)
\put(2836,-2536){\makebox(0,0)[lb]{\smash{{\SetFigFont{12}{14.4}{\rmdefault}{\mddefault}{\updefault}{\color[rgb]{0,0,0}$\gamma_0$}%
}}}}
\put(5536,-2536){\makebox(0,0)[lb]{\smash{{\SetFigFont{12}{14.4}{\rmdefault}{\mddefault}{\updefault}{\color[rgb]{0,0,0}$\gamma_0$}%
}}}}
\put(1621,-1141){\makebox(0,0)[lb]{\smash{{\SetFigFont{12}{14.4}{\rmdefault}{\mddefault}{\updefault}{\color[rgb]{0,0,0}$\gamma_1$}%
}}}}
\put(3646,-1096){\makebox(0,0)[lb]{\smash{{\SetFigFont{12}{14.4}{\rmdefault}{\mddefault}{\updefault}{\color[rgb]{0,0,0}$\gamma_1$}%
}}}}
\put(4906,-2311){\makebox(0,0)[lb]{\smash{{\SetFigFont{12}{14.4}{\rmdefault}{\mddefault}{\updefault}{\color[rgb]{0,0,0}$\gamma_4$}%
}}}}
\put(5626,-1096){\makebox(0,0)[lb]{\smash{{\SetFigFont{12}{14.4}{\rmdefault}{\mddefault}{\updefault}{\color[rgb]{0,0,0}$\gamma_1$}%
}}}}
\put(5716,-2041){\makebox(0,0)[lb]{\smash{{\SetFigFont{12}{14.4}{\rmdefault}{\mddefault}{\updefault}{\color[rgb]{0,0,0}$\gamma_3'$}%
}}}}
\put(4996,-1726){\makebox(0,0)[lb]{\smash{{\SetFigFont{12}{14.4}{\rmdefault}{\mddefault}{\updefault}{\color[rgb]{0,0,0}$\gamma_2$}%
}}}}
\put(3556,-2041){\makebox(0,0)[lb]{\smash{{\SetFigFont{12}{14.4}{\rmdefault}{\mddefault}{\updefault}{\color[rgb]{0,0,0}$\gamma_3$}%
}}}}
\end{picture}%
\end{center}
\caption{Découpage de la surface \label{rel:pant11a}}
\end{figure}

À la différence du cas précédent, on ne va pas découper le revêtement
universel du c\oe{}ur convexe en domaines fondamentaux mais en domaines 
qui sont des domaines fondamentaux d'un revêtements cyclique
(non compact) du c\oe{}ur de la surface. Un modèle de ces domaines
est représenté sur la figure~\ref{rel:dom11} : il est
obtenu en découpant le c\oe{}ur le long de $\gamma_4$ et en
«~déroulant~» la surface le long de $\gamma_1$. Les arcs géodésiques
$\tilde\gamma_i$ sont des relevés des arcs $\gamma_i$.
\begin{figure}[h]
\begin{center}
\begin{picture}(0,0)%
\includegraphics{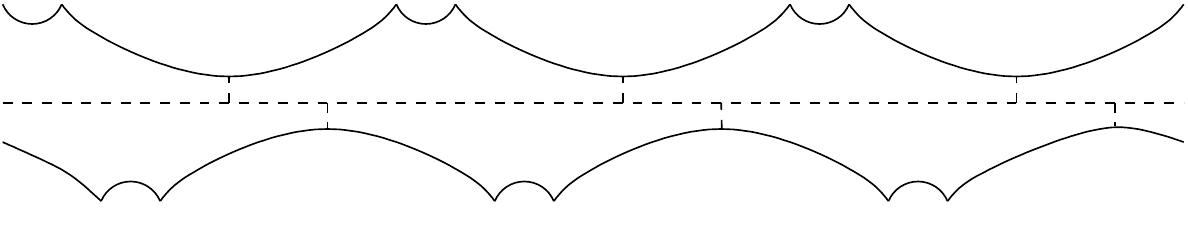}%
\end{picture}%
\setlength{\unitlength}{4144sp}%
\begingroup\makeatletter\ifx\SetFigFont\undefined%
\gdef\SetFigFont#1#2#3#4#5{%
  \reset@font\fontsize{#1}{#2pt}%
  \fontfamily{#3}\fontseries{#4}\fontshape{#5}%
  \selectfont}%
\fi\endgroup%
\begin{picture}(5424,1071)(214,-670)
\put(2566,-601){\makebox(0,0)[lb]{\smash{{\SetFigFont{12}{14.4}{\rmdefault}{\mddefault}{\updefault}{\color[rgb]{0,0,0}$\tilde\gamma_4$}%
}}}}
\put(2971,164){\makebox(0,0)[lb]{\smash{{\SetFigFont{12}{14.4}{\rmdefault}{\mddefault}{\updefault}{\color[rgb]{0,0,0}$\tilde\gamma_3$}%
}}}}
\put(2386,-230){\makebox(0,0)[lb]{\smash{{\SetFigFont{12}{14.4}{\rmdefault}{\mddefault}{\updefault}{\color[rgb]{0,0,0}$\tilde\gamma_1$}%
}}}}
\put(1351,-421){\makebox(0,0)[lb]{\smash{{\SetFigFont{12}{14.4}{\rmdefault}{\mddefault}{\updefault}{\color[rgb]{0,0,0}$\tilde\gamma_0$}%
}}}}
\put(3466,-376){\makebox(0,0)[lb]{\smash{{\SetFigFont{12}{14.4}{\rmdefault}{\mddefault}{\updefault}{\color[rgb]{0,0,0}$\tilde\gamma_3'$}%
}}}}
\end{picture}%
\end{center}
\caption{Revêtement cyclique du domaine fondamental \label{rel:dom11}}
\end{figure}

On peut reprendre les majorations~(\ref{critique:maj0}) 
et~(\ref{critique:maj1}) mais avec deux différences. D'une part, 
les $l_{i}$ prennent une infinité de valeurs. D'autre part,
le segment $[x,x_1]$ peut prendre une infinité de directions.
Au lieu de (\ref{critique:maj0}), on va donc écrire
\begin{equation}\label{critique:maj2}
d(x,g\cdot x) \geq d(x,x_1) + \sum_{i=1}^n l_{\alpha_i}, 
\end{equation}
et remplacer le facteur~4 de l'inégalité~(\ref{critique:maj1})
par la somme d'une série portant sur les distances $d(x,x_1)$.

Pour majorer cette somme $\sum e^{-sd(x,x_1)}$, on s'appuie sur 
la figure~\ref{rel:dom11b}. On choisit comme origine $x$ l'extrémité
de $\tilde\gamma_3$ située sur $\tilde\gamma_1$.
On a noté $x'_i$ différentes positions possibles de
$x_1$ en fonction du bord par lequel la géodésique sort du domaine. Pour
chaque $x'_i$, on découpe le segment $[x,x'_i]$ en sous-segments délimités par
les sous-domaines hexagonaux et on minore la longueur de chaque sous-segment
par sa projection orthogonale sur les géodésiques $\tilde\gamma_0$ ou 
$\tilde\gamma_1$.

\begin{figure}[h]
\begin{center}
\begin{picture}(0,0)%
\includegraphics{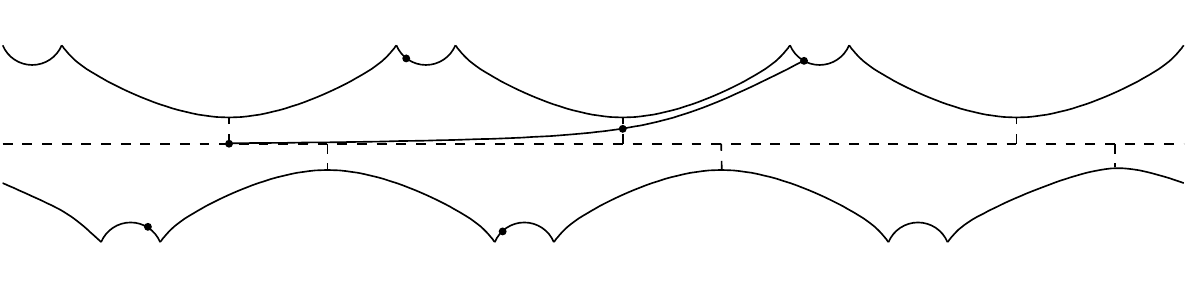}%
\end{picture}%
\setlength{\unitlength}{4144sp}%
\begingroup\makeatletter\ifx\SetFigFont\undefined%
\gdef\SetFigFont#1#2#3#4#5{%
  \reset@font\fontsize{#1}{#2pt}%
  \fontfamily{#3}\fontseries{#4}\fontshape{#5}%
  \selectfont}%
\fi\endgroup%
\begin{picture}(5424,1274)(214,-689)
\put(1184,-210){\makebox(0,0)[lb]{\smash{{\SetFigFont{12}{14.4}{\rmdefault}{\mddefault}{\updefault}{\color[rgb]{0,0,0}$x$}%
}}}}
\put(2503,-625){\makebox(0,0)[lb]{\smash{{\SetFigFont{12}{14.4}{\rmdefault}{\mddefault}{\updefault}{\color[rgb]{0,0,0}$x'_4$}%
}}}}
\put(3872,422){\makebox(0,0)[lb]{\smash{{\SetFigFont{12}{14.4}{\rmdefault}{\mddefault}{\updefault}{\color[rgb]{0,0,0}$x'_2$}%
}}}}
\put(2075,438){\makebox(0,0)[lb]{\smash{{\SetFigFont{12}{14.4}{\rmdefault}{\mddefault}{\updefault}{\color[rgb]{0,0,0}$x'_1$}%
}}}}
\put(2958,-210){\makebox(0,0)[lb]{\smash{{\SetFigFont{12}{14.4}{\rmdefault}{\mddefault}{\updefault}{\color[rgb]{0,0,0}$y$}%
}}}}
\put(755,-610){\makebox(0,0)[lb]{\smash{{\SetFigFont{12}{14.4}{\rmdefault}{\mddefault}{\updefault}{\color[rgb]{0,0,0}$x'_3$}%
}}}}
\end{picture}%
\end{center}
\caption{Longueur de l'arc géodésique initial \label{rel:dom11b}}
\end{figure}

En projetant $[x,x'_1]$ sur $\tilde\gamma_0$, on obtient que 
$d(x,x'_1)\geq l$. 

 Pour minorer la longueur du segment $[x,x'_2]$, on le découpe en deux 
segments $[x,y]$ et $[y,x'_2]$. En projetant le premier sur $\tilde\gamma_1$
et le second sur $\tilde\gamma_0$, on obtient que $d(x,x'_2)\geq 2l$.

En procédant ainsi de suite pour tous les $x'_i$ situés en haut de la
figure~\ref{rel:dom11b}, on obtient une majoration de la somme partielle
$\sum e^{-sd(x,x'_i)}$ par $2\sum_{k=1}^\infty e^{-skl}$.

On procède de manière similaire pour les points $x'_i$ du bas de la figure:
$d(x,x'_3)$ est minoré par 1, $d(x,x'_4)$ est minoré par $l$, etc. On 
obtient une majoration de la somme partielle $\sum e^{-sd(x,x'_i)}$ par 
$1+2\sum_{k=1}^\infty e^{-skl}$.

Globalement, la somme $\sum e^{-sd(x,x_1)}$ est donc majorée par
\begin{equation}
\sum e^{-sd(x,x_1)} \leq 1 + 4\sum_{k=1}^\infty e^{-skl} =
\frac{1 + 3e^{-sl}}{1 - e^{-sl}}.
\end{equation}
On peut donc majorer la série de Poincaré par 

\begin{eqnarray}\label{critique:maj5}
P_\Gamma(s) & \leq &
\frac{1 + 3e^{-sl}}{1 - e^{-sl}}
\sum_{n=1}^\infty\sum_{|\alpha|=n} e^{-s\sum_{i=1}^n l_{\alpha_i}} \nonumber\\
 & \leq & \frac{1 + 3e^{-sl}}{1 - e^{-sl}}
\sum_{n=1}^\infty \left(\sum_i e^{-sl_{\alpha_i}}\right)^n 
\end{eqnarray}
pour les valeurs de $s$ où les séries convergent.

Il reste à majorer la somme de la série $\sum_i e^{-sl_{\alpha_i}}$.
On doit pour cela minorer la longueur des arcs géodésiques qui traverse le
domaine. On va distinguer les géodésiques qui ne coupent pas la
géodésique $\tilde \gamma_1$ (figure~\ref{rel:dom1a}) et celles qui
la coupent (figure~\ref{rel:dom1b}).
\begin{figure}[h]
\begin{center}
\begin{picture}(0,0)%
\includegraphics{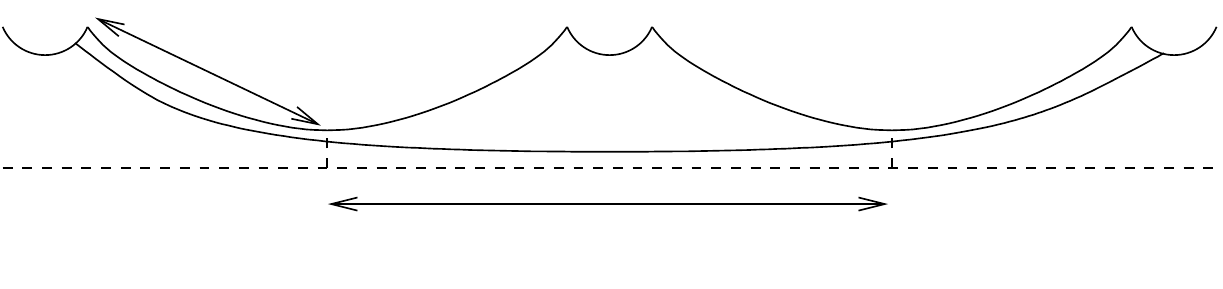}%
\end{picture}%
\setlength{\unitlength}{4144sp}%
\begingroup\makeatletter\ifx\SetFigFont\undefined%
\gdef\SetFigFont#1#2#3#4#5{%
  \reset@font\fontsize{#1}{#2pt}%
  \fontfamily{#3}\fontseries{#4}\fontshape{#5}%
  \selectfont}%
\fi\endgroup%
\begin{picture}(5574,1308)(214,-805)
\put(2701,-736){\makebox(0,0)[lb]{\smash{{\SetFigFont{12}{14.4}{\rmdefault}{\mddefault}{\updefault}{\color[rgb]{0,0,0}$\geq l$}%
}}}}
\put(1036,344){\makebox(0,0)[lb]{\smash{{\SetFigFont{12}{14.4}{\rmdefault}{\mddefault}{\updefault}{\color[rgb]{0,0,0}$\geq l$}%
}}}}
\end{picture}%
\end{center}
\caption{Longueur d'un arc géodésique (premier cas)\label{rel:dom1a}}
\end{figure}
Comme dans le calcul précédent, on découpe l'arc géodésique en segments
travesant un hexagone et on minore la longueur par celle du bord
correspondant de l'hexagone ($\tilde \gamma_1$ ou la moitié de 
$\tilde \gamma_0$, cf. figure~\ref{rel:dom1a}. La somme partielle 
de $\sum_i e^{-sl_{\alpha_i}}$ correspondant à ces arcs est majorée
par 
\begin{equation}
2 (e^{-2sl} + e^{-3sl} + \ldots) = 2 \frac{e^{-2sl}}{1-e^{-sl}}.
\end{equation}
 
\begin{figure}[h]
\begin{center}
\begin{picture}(0,0)%
\includegraphics{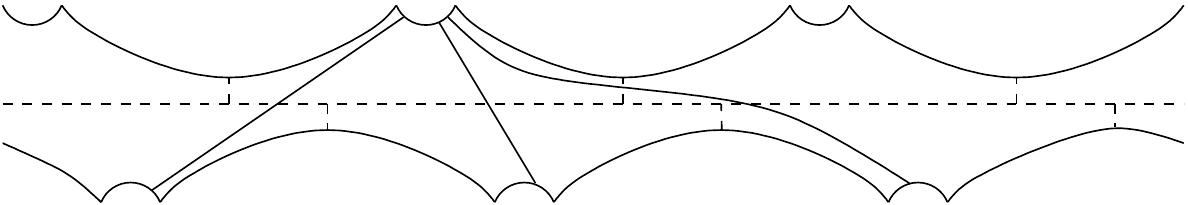}%
\end{picture}%
\setlength{\unitlength}{4144sp}%
\begingroup\makeatletter\ifx\SetFigFont\undefined%
\gdef\SetFigFont#1#2#3#4#5{%
  \reset@font\fontsize{#1}{#2pt}%
  \fontfamily{#3}\fontseries{#4}\fontshape{#5}%
  \selectfont}%
\fi\endgroup%
\begin{picture}(5424,924)(214,-523)
\put(2386,-331){\makebox(0,0)[lb]{\smash{{\SetFigFont{12}{14.4}{\rmdefault}{\mddefault}{\updefault}{\color[rgb]{0,0,0}$a$}%
}}}}
\put(949,-290){\makebox(0,0)[lb]{\smash{{\SetFigFont{12}{14.4}{\rmdefault}{\mddefault}{\updefault}{\color[rgb]{0,0,0}$a'$}%
}}}}
\put(4246,-279){\makebox(0,0)[lb]{\smash{{\SetFigFont{12}{14.4}{\rmdefault}{\mddefault}{\updefault}{\color[rgb]{0,0,0}$b$}%
}}}}
\end{picture}%
\end{center}
\caption{Longueur d'un arc géodésique (second cas)\label{rel:dom1b}}
\end{figure}

Traitons maintenant le 2\ieme{} cas. Deux géodésiques, $a$ et $a'$ sur 
la figure~\ref{rel:dom1b}, ne passent que par deux hexagones, un de
chaque coté de $\tilde \gamma_1$. Comme la distance entre $\gamma_4$
et $\gamma_1$ est minorée par $l/4$ (cf. remarque~\ref{trigo:rem}),
on peut minorer les longueurs de $a$ et $a'$
par $l/2$. Les autres géodésiques (comme $b$ sur la figure~\ref{rel:dom1b})
se traitent comme dans le premier cas.

La somme $\sum_i e^{-sl_{\alpha_i}}$ est finalement majorée par
\begin{equation}\label{critique:maj6}
2e^{-sl/2} + 4 \frac{e^{-2sl}}{1-e^{-sl}}
\end{equation}
et la série converge si cette expression est majorée par 1. En posant
$x = e^{-sl/2}$, cette condition s'écrit $4x^4 - 2x^3 + x^2 + 2x - 1<0$ 
avec $x>0$). On peut vérifier numériquement que ce polynôme n'a qu'une 
racine positive $x\sim 0,4224$. En particulier, si $sl>1,73$ la série
de Poincaré converge, on a donc $\delta_\Gamma<\frac{1,73}l$.

On obtient une majoration de la série de Poincaré en combinant 
l'inégalité~(\ref{critique:maj5}) et la somme de la série géométrique
de raison (\ref{critique:maj6}). On obtient
\begin{eqnarray}\label{critique:maj7}
P_\Gamma(s) & \leq & \frac{1 + 3e^{-sl}}{1 - e^{-sl}} \cdot
\frac{1}{1 - 2e^{-sl/2} - 4 \frac{e^{-2sl}}{1-e^{-sl}}}\nonumber \\
& \leq & \frac{1 + 3e^{-sl}}{ 1 -  2e^{-sl/2} - e^{-sl} 
+ 2 e^{-3sl/2}- 4e^{-2sl}}.
\end{eqnarray}

\end{proof}

\section{Démonstration du théorème dans le cas des surfaces compactes}
\label{demo}

La démonstration du théorème~\ref{intro:th2} suit celle du
théorème~\ref{intro:th1} et repose sur le lemme de topologie algébrique 
(dit «~de type Borsuk-Ulam~») suivant:

\begin{lemm}[\cite{se02},\cite{or09}]\label{demo:lem1}
Soient $k$ et $n$ deux entiers strictement positifs. Si la sphère $\mathbb S^n$ 
admet une partition en $k$ ensembles $\mathbb S_i$ (non nécessairement 
connexes) tels que:
\begin{itemize}
\item pour tout $i$, $\mathbb S_i$ est invariant par l'involution 
antipodale $\tau$;
\item le revêtement $\mathbb S_i \to \mathbb S_i/\tau$ est trivial; 
\end{itemize}
alors $n\leq k-1$.
\end{lemm}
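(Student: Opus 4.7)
Je raisonne par contraposée : supposons donnée une partition $\mathbb{S}^n = \bigsqcup_{i=1}^k \mathbb{S}_i$ vérifiant les deux hypothèses, et montrons $n\leq k-1$. La stratégie naturelle est de fabriquer une application impaire $f\colon \mathbb{S}^n\to \mathbb{S}^{k-1}$ (pour les involutions antipodales de part et d'autre), puisque le théorème classique de Borsuk-Ulam interdit une telle application dès que $n\geq k$, ce qui fournira immédiatement la contradiction attendue.

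La construction de $f$ exploite directement la seconde hypothèse : la trivialité du revêtement double $\mathbb{S}_i\to \mathbb{S}_i/\tau$ permet de choisir, pour chaque $i$, une décomposition $\mathbb{S}_i = A_i\sqcup \tau(A_i)$ en deux pièces disjointes échangées par $\tau$. Notant $(e_1,\dots,e_k)$ la base canonique de $\mathbb{R}^k$, je pose alors $f(x)=e_i$ si $x\in A_i$ et $f(x)=-e_i$ si $x\in \tau(A_i)$. L'application ainsi construite prend ses valeurs dans le sous-ensemble fini $\{\pm e_1,\dots,\pm e_k\}$ de $\mathbb{S}^{k-1}$ et satisfait tautologiquement $f(\tau x)=-f(x)$.

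L'obstacle principal est la continuité de $f$ : les $\mathbb{S}_i$ ne sont supposés ni ouverts ni fermés, et l'application combinatoire n'a donc aucune raison d'être continue en l'état. Dans le contexte d'application visé (les $\mathbb{S}_i$ seront des réunions de domaines nodaux, donc fermés et de frontière assez régulière), il suffit de lisser $f$ au voisinage des bords communs tout en préservant l'équivariance, ce qui produit l'application continue impaire cherchée. Pour une preuve générale, la voie naturelle est cohomologique : la partition descend en $\mathbb{RP}^n = \bigsqcup [\mathbb{S}_i]$, la trivialité de chaque revêtement se traduit par l'annulation de la classe $w\in H^1(\mathbb{RP}^n;\mathbb{Z}/2)$ restreinte à chaque $[\mathbb{S}_i]$, et un argument de type Mayer-Vietoris (ou de cocycles de Čech) force alors $w^k = 0$ dans $H^*(\mathbb{RP}^n;\mathbb{Z}/2)\simeq \mathbb{Z}/2[w]/(w^{n+1})$, d'où $n<k$. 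C'est précisément ce passage de la trivialité locale, sur chaque pièce, à l'annulation globale du cup-produit qui concentre la difficulté de l'argument.
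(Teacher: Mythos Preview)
Le papier ne démontre pas ce lemme : il se contente d'indiquer qu'il découle d'un résultat plus général de Sévennec (\cite{se02}, lemme~8). Il n'y a donc pas de preuve interne à laquelle comparer ta proposition.

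Sur le fond, ta seconde voie --- cohomologique, via l'annulation de $w^k$ dans $H^*(\mathbb{RP}^n;\mathbb{Z}/2)$ --- est bien celle de Sévennec, et tu identifies correctement que toute la difficulté se concentre dans le passage de la trivialité sur chaque pièce à l'annulation du cup-produit global. En revanche, ta première voie repose sur une lecture erronée du contexte d'application : les $\mathbb{S}_i$ ne sont \emph{pas} des réunions de domaines nodaux sur la surface~$S$, mais des sous-ensembles de la sphère unité $\mathbb{S}(\mathcal{E})$ de l'espace fonctionnel~$\mathcal{E}$, définis par la condition $\chi^+(f)+\chi^-(f)=i$. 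Cette quantité à valeurs entières n'est pas localement constante (la topologie de l'ensemble nodal peut sauter quand $f$ varie continûment dans~$\mathcal{E}$), de sorte que les $\mathbb{S}_i$ ne sont en général ni ouverts ni fermés dans $\mathbb{S}(\mathcal{E})$. Aucun lissage «~au voisinage des bords communs~» ne permet donc de rendre continue ton application impaire à valeurs dans $\{\pm e_1,\dots,\pm e_k\}$, même dans le contexte visé. C'est précisément pour cette raison que l'énoncé autorise des parties arbitraires et que la preuve doit passer par un argument cohomologique robuste vis-à-vis de sous-ensembles quelconques (cohomologie de \v{C}ech, indice équivariant), comme tu le pressens dans ta seconde approche.
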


Ce lemme découle d'un résultat plus général de B.~Sévennec (\cite{se02},
lemme~8).  Il est appliqué dans \cite{or09} à la sphère unité de
l'espace engendré par les fonctions propres associées aux petites
valeurs propres de la surface $S$. Nous allons rappeler les
grandes lignes de la démonstration de \cite{or09} pour ensuite indiquer
comment l'hypothèse de systole minorée permet d'améliorer
le résultat (lemme~\ref{demo:lem4}).

Notons $\mathcal E$ l'espace engendré par les fonctions propres
associées aux valeurs propres $\leq \frac14$ (y compris les fonctions
constantes). Soit $f\in \mathcal E \backslash\{0\}$. Son ensemble nodal
$\gamma(f) = f^{-1}(0)$ est un graphe localement fini (\cite{or09}, 
proposition~5), et donc fini puisqu'on suppose que la 
surface est compacte. Il n'est pas nécessairement connexe et peut contenir
des sommets isolés. Sur chaque composante connexe de $S\backslash 
\gamma(f)$, le signe de $f$ est bien défini.

On note $G(f)$ le graphe obtenu en retirant de l'ensemble nodal les 
composantes contenues dans des disques. Comme chaque composante de
$S\backslash G(f)$ est la réunion d'une composante de 
$S\backslash \gamma(f)$ et de disques disjoints, on peut leur associer
un signe, à savoir celui de $f$ sur la composante de $S\backslash \gamma(f)$.

On note $S^+(f)$ (resp.  $S^-(f)$) la réunion des composantes positives
(resp. négatives) de $S\backslash G(f)$ qui ne sont pas des disques ou
des anneaux. Les ensembles $S^+(f)$ et $S^-(f)$ ne sont pas
nécessairement connexes, peuvent être vides, mais sont incompressibles,
c'est-à-dire que leur groupe fondamental s'injecte dans celui de $S$.

En notant $\chi^+(f)$ (resp.  $\chi^-(f)$) la caractéristique d'Euler
de $S^+(f)$, on a d'une part $\chi^\pm(f)\leq0$ par construction
(avec égalité si et seulement si $S^\pm$ est vide), et d'autre part
 $\chi^+(f) + \chi^-(f) \geq\chi(S)$ du fait de l'incompressibilité.

La démonstration de  \cite{or09} se conclut par deux lemmes. Le
premier exploite le fait que $\mathcal E$ est engendré par les fonctions
propres dont les valeurs propres sont petites :
\begin{lemm}[\cite{or09}, affirmation 6]\label{demo:lem2}
Pour toute fonction non nulle $f\in\mathcal E$, on a 
$\chi^+(f) + \chi^-(f)<0$.
\end{lemm}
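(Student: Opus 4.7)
Ma stratégie est un raisonnement par l'absurde. Comme $\chi^\pm(f) \leq 0$ avec égalité si et seulement si $S^\pm$ est vide, supposer $\chi^+(f) + \chi^-(f) \geq 0$ revient à supposer $S^+(f) = S^-(f) = \emptyset$, c'est-à-dire que toute composante connexe de $S \setminus G(f)$ est un disque ou un anneau topologique.

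Je chercherais alors à contredire l'hypothèse $f \in \mathcal E$ par une comparaison de quotients de Rayleigh. D'une part, puisque $f$ est combinaison de fonctions propres de valeurs propres $\leq \frac14$, on a globalement
$$
\int_S |\nabla f|^2 \leq \frac14 \int_S f^2.
$$
D'autre part, $f$ s'annule sur $G(f) \subset \gamma(f)$, donc $f|_C \in H^1_0(C)$ pour chaque composante $C$ de $S\setminus G(f)$, et la caractérisation variationnelle de la première valeur propre de Dirichlet fournit
$$
\int_S |\nabla f|^2 = \sum_C \int_C |\nabla f|^2 \geq \sum_C \lambda_1^D(C) \int_C f^2.
$$

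Tout repose alors sur l'inégalité stricte $\lambda_1^D(C) > \frac14$ pour toute composante $C$ qui est un disque ou un anneau dans la surface hyperbolique $S$. Dans le cas d'un disque, $C$ se relève isométriquement en un ouvert relativement compact $\tilde C \subset \mathbb H^2$, et l'estimée résulte de ce que $\frac14 = \inf\sigma(\Delta_{\mathbb H^2})$ n'est pas valeur propre du laplacien de Dirichlet sur un tel ouvert. Dans le cas d'un anneau, une décomposition de Fourier suivant la direction cyclique de $\pi_1(C)=\mathbb Z$ ramène le calcul à un problème de Sturm-Liouville unidimensionnel sur un intervalle transverse borné, et la même minoration stricte en découle.

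En combinant les deux inégalités, et en observant que $f \not\equiv 0$ assure $\int_C f^2 > 0$ pour au moins une composante, on aboutit à l'inégalité stricte $\int_S |\nabla f|^2 > \frac14 \int_S f^2$, qui contredit l'appartenance de $f$ à $\mathcal E$. L'obstacle principal de cette approche est clairement la minoration stricte $\lambda_1^D(C) > \frac14$ dans le cas d'un anneau, dont le relevé à $\mathbb H^2$ est un ouvert non borné : c'est un argument spectral fin que je reprendrais tel quel de \cite{ot08} plutôt que de le redémontrer, le reste de la preuve étant essentiellement formel.
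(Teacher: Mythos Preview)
Your approach is correct and essentially the same as the paper's: both hinge on the fact that a disk or annulus in $S$ lifts isometrically to a complete hyperbolic surface whose spectrum is $[\tfrac14,+\infty[$, so that $f$ restricted to such a piece must have Rayleigh quotient strictly above $\tfrac14$. The only difference is packaging: the paper singles out \emph{one} nodal domain of $S\setminus\gamma(f)$ with Rayleigh quotient $\le\tfrac14$ and shows it cannot be a disk or annulus, while you assume \emph{all} components of $S\setminus G(f)$ are disks or annuli and derive a global contradiction by summation --- these are contrapositives of one another.

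One point to sharpen: for the annulus case you lift to $\mathbb H^2$, observe that the image is unbounded, and invoke a ``fine spectral argument'' or a Fourier decomposition. This is unnecessarily complicated (and the Fourier decomposition does not apply directly, since a general annular component of $S\setminus G(f)$ has no circular symmetry). The paper's route is simpler: lift instead to the hyperbolic \emph{cylinder} $\mathbb H^2/\langle g\rangle$, where $g\in\pi_1(S)$ is the hyperbolic element carried by the core curve (which is essential because the components of $S\setminus G(f)$ are incompressible). There the annulus is \emph{relatively compact}, the bottom of the spectrum is $\tfrac14$ and is not an eigenvalue, so $\lambda_1^D>\tfrac14$ follows exactly as in the disk case.
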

L'idée de la démonstration est qu'il existe au moins une composante de
$S\backslash \gamma(f)$ sur laquelle le quotient de Rayleigh de $f$
est inférieur à $\frac14$. Cette composante a nécessairement une
caractéristique d'Euler strictement négative: si elle était égale à~1 ou~0,
on pourrait relever cette composante soit au plan hyperbolique, 
soit à un cylindre hyperbolique. C'est impossible car le relevé de $f$
restreinte à la composante aurait le même quotient de Rayleigh.
Or le plan et le cylindre hyperbolique
ont un spectre égal à $[\frac14;+\infty[$ (le cas où le quotient de
Rayleigh est exactement $\frac14$ est exclu car le relevé de $f$ serait
alors une fonction propre, ce qui est impossible puisqu'il est
nul en dehors d'un compact).

Pour appliquer le lemme~\ref{demo:lem1}, on partitionne la sphère unité
$\mathbb S(\mathcal E)$ en définissant les ensembles $\mathbb S_i$,
$\chi(S)\leq i \leq -1$ par
\begin{equation}
\mathbb S_i = \{f \in \mathcal E\backslash\{0\},\  
\chi^+(f) + \chi^-(f) = i \}.
\end{equation}
Les ensembles $\mathbb S_i$ sont clairement invariants par l'involution
antipodale $\tau$ puisque $\chi^\pm(-f) = \chi^\mp(f)$. Le second lemme
assure que la dernière hypothèse du lemme~\ref{demo:lem1} est
vérifiée:
\begin{lemm}[\cite{or09}, lemme 7]
Pour tout entier $\chi(S)\leq i \leq -1$, le revêtement $\mathbb S_i\to
\mathbb S_i/\tau$ est trivial.
\end{lemm}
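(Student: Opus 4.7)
Plan. L'objectif est de construire une application continue et $\tau$-antisymétrique $\sigma:\mathbb{S}_i\to\{-1,+1\}$, ce qui équivaut à la trivialité du revêtement double $\mathbb{S}_i\to \mathbb{S}_i/\tau$. L'idée centrale est d'associer à chaque $f\in\mathcal{E}\setminus\{0\}$ la donnée combinatoire fournie par le couple étiqueté $\bigl(S^+(f),S^-(f)\bigr)$, vu comme une partition de~$S$ en sous-surfaces incompressibles portant un signe, et de montrer que cette donnée est localement constante à isotopie ambiante près.

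La première étape, qui est la plus délicate, consiste à établir cette constance locale. D'après la proposition~5 de \cite{or09}, l'ensemble nodal $\gamma(f)$ est un graphe fini. Pour une perturbation $f'$ suffisamment petite de $f$ dans l'espace de dimension finie $\mathcal{E}$, on veut voir que $\gamma(f')$ est isotope à $\gamma(f)$ à création ou disparition près de composantes contenues dans des disques ou des anneaux. Or, par définition de $G(f)$, ces composantes-là ont été gommées avant de définir $S^\pm(f)$, donc ne modifient pas la donnée étiquetée à isotopie près. L'argument de stabilité demande un soin particulier aux valeurs de $f$ pour lesquelles $\gamma(f)$ présente des singularités (points critiques de $f$ sur le lieu nodal) : l'analyse locale de type Morse assure qu'un petit mouvement dans $\mathcal{E}$ se traduit par une déformation isotope de $\gamma(f)$ hors de ses disques/anneaux. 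On obtient ainsi une application localement constante sur $\mathbb{S}_i$ à valeurs dans l'ensemble discret (et en fait fini, compte tenu des contraintes $\chi^\pm(f)\leq 0$ et $\chi^+(f)+\chi^-(f)=i$) des classes d'isotopie ambiante de partitions étiquetées de~$S$ en sous-surfaces incompressibles.

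Dans la seconde étape, on observe que l'involution $\tau$ agit sur cet ensemble fini d'invariants par échange des étiquettes $+$ et $-$. Comme $S^+(f)$ et $S^-(f)$ sont deux sous-ensembles disjoints de $S$, et qu'au moins l'un des deux est non vide puisque $\chi^+(f)+\chi^-(f)=i<0$, la donnée étiquetée associée à $-f$ est toujours distincte de celle associée à $f$ : l'action de $\tau$ sur l'ensemble des invariants est libre. On obtient donc une partition finie de $\mathbb{S}_i$ en ouverts-fermés (les préimages des valeurs de l'invariant), appariés deux à deux par $\tau$. Le choix d'un représentant par orbite fournit alors directement la section $\sigma$ cherchée.

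L'obstacle principal réside donc dans l'étape~1 : il faut contrôler précisément la stabilité combinatoire du graphe nodal $\gamma(f)$ et des sous-surfaces incompressibles $S^\pm(f)$ sous perturbation, en traitant soigneusement les configurations dégénérées où le lieu nodal admet des singularités. L'étape~2 est quant à elle essentiellement formelle, une fois admis que la donnée signée distingue toujours $f$ de $-f$.
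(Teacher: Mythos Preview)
Le présent article ne redémontre pas ce lemme : il l'invoque directement depuis \cite{or09} (lemme~7) et l'utilise tel quel. Ton plan est bien celui d'Otal--Rosas : associer à $f$ la classe d'isotopie de la paire étiquetée $(S^+(f),S^-(f))$, établir sa constance locale sur chaque strate $\mathbb S_i$, puis vérifier que $\tau$ agit librement sur l'ensemble fini des valeurs prises.

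Deux précisions néanmoins. D'abord, l'étape que tu signales comme délicate l'est effectivement, mais dans \cite{or09} la constance locale ne s'obtient pas en suivant le graphe nodal par une analyse de type Morse ; l'argument passe par une semi-continuité : pour $f'$ voisin de $f$, une copie légèrement rétrécie de chaque composante de $S^\pm(f)$ (sur laquelle $\pm f$ reste strictement positif) se plonge de façon incompressible dans une composante de $S^\pm(f')$, d'où $\chi^\pm(f')\le\chi^\pm(f)$ ; sur $\mathbb S_i$ la somme est fixée à~$i$, ce qui force les égalités, puis l'égalité des classes d'isotopie. Ton idée \og Morse \fg{} est compatible avec ce schéma mais demanderait un travail conséquent près des singularités nodales (modèle local en $\mathrm{Re}(z^k)$), alors que l'argument de semi-continuité les contourne.

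Ensuite, ton étape~2 n'est pas aussi formelle que tu l'écris. Que $S^+(f)$ et $S^-(f)$ soient disjoints et non tous deux vides montre qu'ils diffèrent \emph{comme sous-ensembles de $S$}, mais ton invariant est une \emph{classe d'isotopie ambiante}. Il faut encore exclure qu'un élément de $\mathrm{Diff}_0(S)$ envoie $S^+(f)$ sur $S^-(f)$, autrement dit que deux sous-surfaces incompressibles disjointes de caractéristique d'Euler strictement négative puissent être isotopes. C'est vrai --- par exemple via l'unicité du représentant à bord géodésique d'une sous-surface incompressible sur une surface hyperbolique --- mais cela demande un argument et ne découle pas de la seule disjonction.
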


Comme les $\mathbb S_i$ sont au nombre de $-\chi(S)$, le 
lemme~\ref{demo:lem1} permet alors d'affirmer que $\dim \mathcal E 
\leq -\chi(S)$.

On obtient le théorème~\ref{intro:th2} en montrant que sous l'hypothèse
de systole minorée, les parties $\mathbb S_i$ ne sont qu'au nombre 
de $-\chi(S)-1$:

\begin{lemm}\label{demo:lem4}
Si la systole de $S$ est supérieure à 3,46, alors $\mathbb S_{-1}$ est
vide.
\end{lemm}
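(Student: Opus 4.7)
Raisonnons par l'absurde : soit $f \in \mathbb S_{-1}$. Comme $\chi^\pm(f)$ sont des entiers négatifs ou nuls de somme $-1$, on peut supposer (en remplaçant au besoin $f$ par $-f$) que $\chi^+(f) = -1$ et $\chi^-(f) = 0$, c'est-à-dire que $S^-(f)$ est vide et que $S^+(f) = \Omega$ est une unique composante de $S \setminus G(f)$, incompressible dans $S$, de type topologique $(0,3)$ ou $(1,1)$. La stratégie est de transporter l'argument du lemme~\ref{demo:lem2} au revêtement $S' = \mathbb H^2 / \pi_1(\Omega)$, où l'hypothèse de systole permettra, via le théorème~\ref{critique:theo} et la théorie de Patterson--Sullivan, un contrôle strict du bas du spectre.

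D'abord, comme dans la preuve du lemme~\ref{demo:lem2}, on choisit une composante $\Omega_0$ de $S \setminus \gamma(f)$ telle que le quotient de Rayleigh de $f|_{\Omega_0}$ soit $\leq \frac14$ et que l'image de $\pi_1(\Omega_0)$ dans $\pi_1(S)$ soit non cyclique : sans cette dernière propriété, le relevé à support compact de $f|_{\Omega_0}$ au plan hyperbolique ou à un cylindre hyperbolique contredirait le fait que le spectre de ces variétés vaut $[\frac14, +\infty)$ sans valeur propre $L^2$ en $\frac14$. Parmi les composantes de $S \setminus G(f)$, seule $\Omega$ a un groupe fondamental d'image non cyclique dans $\pi_1(S)$ (les autres sont des disques ou des anneaux, à image cyclique), donc $\Omega_0 \subset \Omega$.

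Notons $\Gamma' = \pi_1(\Omega)$, sous-groupe convexe cocompact de $\pi_1(S) \subset \Isom^+(\mathbb H^2)$, et $S' = \mathbb H^2 / \Gamma'$. Comme toute géodésique périodique de $S'$ se projette sur une géodésique périodique de $S$ de même longueur, la systole de $S'$ est au moins celle de $S$, donc strictement supérieure à $3{,}46$. Puisque $3{,}46 > 4\ln 2$, le théorème~\ref{critique:theo} fournit dans les deux cas topologiques $\delta_{\Gamma'} < \frac12$. Par la théorie de Patterson--Sullivan, il s'ensuit que $\lambda_0(S') = \frac14$ et que $\frac14$ n'est pas valeur propre $L^2$ de $S'$ : en particulier, pour toute fonction $u \in H^1_0(S')$ non nulle à support compact, on a l'inégalité \emph{stricte} $\int_{S'} |\nabla u|^2 > \frac14 \int_{S'} u^2$.

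Pour conclure, l'inclusion $\Gamma' \hookrightarrow \pi_1(S)$ permet de relever $\Omega$ (et donc $\Omega_0$) en un sous-ensemble homéomorphe de $S'$ ; la fonction $f|_{\Omega_0}$, nulle sur $\partial \Omega_0 \subset \gamma(f)$, se prolonge par $0$ en une fonction $\tilde f$ à support compact sur $S'$, de quotient de Rayleigh $\leq \frac14$, ce qui contredit l'inégalité ci-dessus. Le point véritablement nouveau par rapport à Otal--Rosas, et sans doute le plus délicat, est l'exclusion stricte de $\frac14$ du spectre ponctuel de $S'$ : c'est la majoration de l'exposant critique (théorème~\ref{critique:theo}), reposant sur l'hypothèse de systole, qui la rend possible.
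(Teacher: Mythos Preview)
Your proof is correct and follows essentially the same approach as the paper's: locate a nodal component with Rayleigh quotient at most $\tfrac14$, lift it to the convex cocompact cover $S'=\mathbb H^2/\Gamma'$ of Euler characteristic $-1$, invoke Theorem~\ref{critique:theo} under the systole hypothesis to get $\delta_{\Gamma'}<\tfrac12$, and conclude via Patterson--Sullivan. One cosmetic difference worth noting: you take $\Gamma'=\pi_1(\Omega)$ with $\Omega$ the incompressible component of $S\setminus G(f)$, whereas the paper writes $\Gamma'=\pi_1(C)$ for the nodal component $C\subset S\setminus\gamma(f)$ and appeals to ``incompressibility'' without further comment; your formulation makes the injectivity of $\pi_1\to\pi_1(S)$ (hence the convex cocompactness of $\Gamma'$ and the control on $\chi(S')$) transparent, since $C$ itself need not be incompressible while $\Omega$ is by construction.
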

\begin{proof}
La démonstration reprend celle du lemme~\ref{demo:lem2}. Si 
$f\in\mathcal E\backslash\{0\}$ a un quotient de Rayleigh inférieur ou
égal à $\frac14$, il existe une composante $C$ de $S\backslash \gamma(f)$
sur laquelle le quotient de Rayleigh de $f_{|C}$ est aussi $\leq\frac14$.
D'après l'argument de \cite{or09}, cette composante $C$ ne peut
pas se relever au plan ou à un cylindre hyperbolique et sa caractéristique
d'Euler de peut pas être~1 ou~0.

Si la caractérique de $C$ est -1, alors cette composante se relève à une
surface hyperbolique complète $S'$ de caractéristique~-1,
qui est le quotient du plan hyperbolique par le groupe fondamental de $C$
(qu'on peut voir comme un sous-groupe du groupe fondamental de $S$ du
fait de l'incompressibilité). La surface $S'$ est alors un revêtement
riemannien de $S$ et sa systole est au moins égale à celle de $S$.

Si la systole de $S$ (et donc de $S'$) est supérieure à 3,46, un tel 
relevé ne peut pas
exister : en effet, le bas du spectre de $S'$ serait
alors $\leq\frac14$. Elle serait donc le quotient de $\mathbb H^2$ par
un groupe fuchsien d'exposant critique $\leq\frac12$, d'après la théorie de 
Patterson-Sullivan (voir par exemple \cite{bo07}, théorème~14.1). 
Or, on a vu dans la section précédente 
(théorème~\ref{critique:theo}) que c'est
impossible avec une systole plus grande que 3,46. 

 Par conséquent, la caractéristique d'Euler de $C$ est strictement
inférieure à -1, et on a nécessairement $\chi^+(f) + \chi^-(f)<-1$. La
fonction $f$ n'appartient donc pas à $\mathbb S_{-1}$. 
\end{proof}

Le théorème~\ref{intro:th2} en découle immédiatement : les ensembles 
$\mathbb S_i$ qui partitionnent la sphère $\mathbb S(\mathcal E)$ sont
seulement au nombre de $-\chi(S)-1$, donc $\dim\mathcal E \leq -\chi(S)-1$.

\section{Le cas des surfaces non compactes}

La principale difficulté qui apparaît dans le cas non compact est que le
graphe nodal d'une combinaison linéaire de fonctions propres n'est pas 
forcément fini. Comme on est dans un contexte de courbure -1, on 
peut se dispenser des techniques d'ensemble nodal approché utilisées dans 
\cite{bmm16} et \cite{bmm17}, ce qui permet une démonstration plus directe
que dans \cite{bmm17}. L'idée est de se ramener au cas compact au moyen
d'une fonction de coupure: on construit un domaine relativement 
compact $D$ de $S$ difféomorphe à $S$ et un espace de fonctions 
$\mathcal E'$ sur $D$ ayant les mêmes propriétés que les fonctions de 
$\mathcal E$. La démonstration du cas compact peut alors 
s'appliquer à $\mathcal E'$.

\begin{lemm}
Soit $(S,g)$ une surface hyperbolique convexe cocompacte
et $k$ le nombre de valeurs propres du laplacien sur $S$
contenues dans l'intervalle $[0,\frac14[$. Il existe un domaine relativement
compact $D$ de $S$ difféomorphe à $S$ et un espace $\mathcal E'$ de fonctions 
sur $D$ de dimension $k$ tel que
\begin{enumerate}
\item il existe une constante $\lambda\in[0,\frac14[$ telle que le quotient
de Rayleigh de toute fonction de $\mathcal E'$ soit inférieur à $\lambda$;
\item chaque fonction de $\mathcal E'$ vérifie la condition de Dirichlet et
ses lignes nodales sur $\bar D$ sont localement celles d'une fonction 
analytique.
\end{enumerate}
\end{lemm}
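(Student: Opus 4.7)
Le plan est de multiplier les fonctions propres de $S$ associées aux valeurs propres contenues dans $[0,\frac14[$ par une fonction de coupure à support compact, et d'exploiter la décroissance exponentielle de ces fonctions propres dans les pavillons pour préserver les propriétés requises.

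Soit $\varphi_1,\ldots,\varphi_k$ une base $L^2$-orthonormée de fonctions propres de $S$ de valeurs propres $\lambda_1,\ldots,\lambda_k<\frac14$ (qui sont en nombre fini et $L^2$ puisque le spectre continu commence à $\frac14$), et $\mathcal E$ l'espace qu'elles engendrent. Comme $S$ est convexe cocompacte, le complémentaire de son c\oe{}ur convexe $\mathcal C$ est une réunion disjointe de pavillons. Pour $R>0$, je note $K_R$ le $R$-voisinage fermé de $\mathcal C$ dans $S$: pour $R$ assez grand, $K_R$ est une sous-surface compacte à bord lisse telle que $\mathrm{int}(K_R)$ soit difféomorphe à $S$. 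Je choisis une fonction de coupure lisse $\chi_R$ valant~1 sur $K_R$, nulle hors de $K_{R+1}$, avec $\|\nabla\chi_R\|_\infty$ borné indépendamment de $R$. On pose alors $D=\mathrm{int}(K_{R+1})$ et $\mathcal E' = \{\chi_R f \,:\, f\in\mathcal E\}$.

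Le c\oe{}ur de la démonstration consiste à vérifier que pour $R$ assez grand, l'application $f\mapsto \chi_R f$ est injective sur $\mathcal E$ (ce qui donnera $\dim \mathcal E' = k$) et que le quotient de Rayleigh de toute fonction non nulle de $\mathcal E'$ reste majoré par une constante $\lambda<\frac14$. L'injectivité résulte de l'unicité du prolongement analytique: toute fonction de $\mathcal E$ est analytique, donc ne peut s'annuler sur un ouvert non vide sans être identiquement nulle. Pour le quotient de Rayleigh, j'utilise l'estimée classique de décroissance exponentielle des fonctions propres $L^2$ dans les pavillons d'une surface convexe cocompacte: il existe $C,c>0$ tels que $\|\varphi_i\|_{C^1(S\setminus K_R)} \leq Ce^{-cR}$ pour tout~$i$. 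En développant
\[
|\nabla(\chi_R f)|^2 = \chi_R^2|\nabla f|^2 + 2\chi_R f\,\nabla\chi_R\cdot\nabla f + f^2|\nabla\chi_R|^2
\]
et en majorant les termes additionnels par des quantités supportées dans l'anneau $K_{R+1}\setminus K_R$ où $f$ et $\nabla f$ sont uniformément petites sur la sphère unité (compacte, puisque $\mathcal E$ est de dimension finie) de $\mathcal E$, on obtient
\[
\int_D |\nabla(\chi_R f)|^2 \leq \int_S|\nabla f|^2 + O(e^{-cR}) \quad\text{et}\quad \int_D(\chi_R f)^2 \geq 1 - O(e^{-cR}).
\]
Le quotient de Rayleigh de $\chi_R f$ est donc majoré par $\lambda_k + O(e^{-cR})$, strictement inférieur à $\frac14$ pour $R$ assez grand.

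Les deux dernières conditions sont alors immédiates: la condition de Dirichlet est vérifiée car $\chi_R f$ s'annule sur $\partial D$, et dans $D$ où $\chi_R > 0$, les lignes nodales de $\chi_R f$ coïncident avec celles de la fonction analytique $f$. L'obstacle principal est la décroissance exponentielle uniforme des fonctions propres dans les pavillons: c'est un résultat classique (qui s'obtient par séparation des variables dans les coordonnées de Fermi du pavillon et examen de l'équation $(\Delta - \lambda)\varphi = 0$), mais il faut y faire appel soigneusement pour obtenir l'uniformité sur la sphère unité de $\mathcal E$ et la convergence des normes $C^1$ (et pas seulement $L^2$) utilisée pour estimer le terme $f\,\nabla\chi_R\cdot\nabla f$.
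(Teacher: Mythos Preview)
Your strategy --- multiply the eigenfunctions by a cutoff and control the Rayleigh quotient uniformly on the unit sphere of $\mathcal E$ --- is the same as the paper's. Two differences are worth noting.

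For the uniform Rayleigh bound you invoke exponential decay of the $\varphi_i$ in the funnels. The paper avoids any decay estimate: it simply observes that for each $f$ on the (compact) unit sphere $\mathbb S(\mathcal E)$ the tail $T_\varepsilon(f)=\int_{r>1/\varepsilon}(f^2+|df|^2)$ decreases to~$0$ as $\varepsilon\to 0$, and applies Dini's theorem to get uniform convergence. This is more elementary and removes what you yourself flag as the ``obstacle principal''.

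There is also a small gap in your treatment of the nodal set. You cover the interior of $D$ (where $\chi_R>0$ the zeros of $\chi_R f$ are those of the analytic $f$), but the statement asks for the nodal structure on $\bar D$. Near $\partial D$ your $\chi_R f$ is smooth but not analytic, since a smooth bump vanishes to infinite order; you would still need to argue that $\partial D\cup\{f=0\}$ is locally the zero set of an analytic function --- e.g.\ $(r-(R{+}1))\cdot f$ after choosing $\partial D$ to be a level set of the Fermi coordinate $r$. The paper sidesteps this by taking the cutoff piecewise \emph{linear} in the analytic coordinate~$r$: then $\eta_\varepsilon f=(2-\varepsilon r)f$ is itself analytic in a neighbourhood of $\partial D$, so the nodal structure there is immediate.
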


Rappelons que la surface est la réunion d'un c\oe{}ur convexe
(qui est une surface hyperbolique compacte à bord géodésique) et de vasques
recollées sur les bords du c\oe{}ur convexe (voir par exemple \cite{bo07},
ch.~2). En vertu d'un théorème de
P.~Lax et R.~S.~Phillips \cite{lp82}, sur une surface hyperbolique d'aire 
infinie, $\frac14$ ne peut pas être valeur propre (cf. \cite{bo07}, ch.~7). 
On se contentera donc 
d'étudier les valeurs propres dans l'intervalle $[0,\frac14[$.

\begin{proof}
 Dans chaque vasque de $S$, on écrit la métrique dans les coordonnées
de Fermi 
\begin{equation}
g = \de r^2 + \cosh ^2 r \de t^2,
\end{equation}
où la courbe $r=0$ est la géodésique bordant la vasque.

Pour tout $\varepsilon>0$, on définit une fonction de coupure 
$\eta_\varepsilon$ par
\begin{equation}
\left\{
\begin{array}{ll}
\eta_\varepsilon = 1 & \textrm{si } r \leq \frac1\varepsilon \\
\eta_\varepsilon = 2 - \varepsilon r & \textrm{si } 
\frac1\varepsilon < r < \frac2\varepsilon \\
\eta_\varepsilon = 0 & \textrm{si } r \geq \frac2\varepsilon \\
\end{array}
\right.
\end{equation}
et on prolonge $\eta_\varepsilon$ par 1 en dehors des vasques. Son
gradient est nul partout sauf pour 
$r\in]\frac1\varepsilon, \frac2\varepsilon[$ où 
$|\de\eta_\varepsilon| = \varepsilon$, et pour $r=\frac1\varepsilon,\ 
\frac2\varepsilon$ où il n'est pas défini.

 Pour chaque $\varepsilon>0$, on définit le domaine $D_\varepsilon$ de $S$
comme la réunion du  c\oe{}ur convexe et des domaines définis par
$r<\frac2\varepsilon$ dans chaque vasque. On veut montrer qu'il existe
un $\varepsilon$ tel que la conclusion du lemme soit vérifiée pour 
le domaine $D_\varepsilon$.

Soit $\lambda_S$ la plus grande des valeurs propres du laplacien
sur $S$ contenue dans l'intervalle $[0,\frac14[$.


 Pour tout $f\in \mathbb S$, l'intégrale $\int_{r>\frac1\varepsilon}f^2$ tend
vers~0 en décroissant quand $\varepsilon\to0$, de même que
$\int_{r>\frac1\varepsilon}|\de f|^2$. D'après le théorème de Dini, 
la famille d'applications $T_\varepsilon:\mathbb S\to\R$
définie par $T_\varepsilon(f)=\int_{r>\frac1\varepsilon}f^2+
\int_{r>\frac1\varepsilon}|\de f|^2$ converge donc
uniformément vers la fonction nulle quand $\varepsilon\to0$.

Pour tout $f\in \mathbb S$, on pose $f_\varepsilon=\eta_\varepsilon f$. Comme
$\|f-f_\varepsilon\|^2\leq T_\varepsilon(f)$, $\|f_\varepsilon\|$
converge uniformément vers 1 sur $\mathbb S$, quand $\varepsilon\to0$.

De plus, comme $\de f_\varepsilon=f\de\eta_\varepsilon+\eta_\varepsilon\de f$,
on a,
\begin{eqnarray*}
\|\de f_\varepsilon-\de f\|&\leq& \|f\de\eta_\varepsilon\|+
\|(1-\eta_\varepsilon)\de f\|\\
&\leq& \varepsilon+\sqrt{T_\varepsilon(f)}.
\end{eqnarray*}
Donc $\|\de f_\varepsilon\|$ converge uniformément sur $\mathbb S$ vers
$\|\de f\|$. On en déduit que le quotient de Rayleigh $R(f_\varepsilon)$
de $f_\varepsilon$ converge uniformément vers celui de $f$ quand
$\varepsilon\to0$. Comme le quotient de Rayleigh sur $\mathbb S$ est majoré
par $\lambda_S<\frac14$, on peut trouver $\varepsilon$ suffisamment
petit et $\lambda\in]\lambda_S,\frac14[$ tels que $R(f_\varepsilon)
<\lambda$ pour tout $f$.

Un tel $\varepsilon$ étant fixé, on définit $D=D_\varepsilon$
$\mathcal E'=\{(\eta_\varepsilon f)_{|D},\ f\in \mathcal E\}$. 
Par construction, le quotient de Rayleigh est majoré par $\lambda$ sur 
$\mathcal E'$. 
Comme les fonctions $f\in \mathcal E$ et $r$ sont analytiques, les lignes 
nodales de $f_\varepsilon$ sont bien localement celles d'une fonction 
analytique : celles de $f$ à l'intérieur de $D$ et celles de 
$\eta_\varepsilon f$ au voisinage du bord de $\bar D$.
\end{proof}

\providecommand{\bysame}{\leavevmode ---\ }
\providecommand{\og}{``}
\providecommand{\fg}{''}
\providecommand{\smfandname}{\&}
\providecommand{\smfedsname}{\'eds.}
\providecommand{\smfedname}{\'ed.}
\providecommand{\smfmastersthesisname}{M\'emoire}
\providecommand{\smfphdthesisname}{Th\`ese}

\end{document}